\newtheorem{thm}{Theorem}
\newtheorem{cor}[thm]{Corollary}
\newtheorem{lem}[thm]{Lemma}
\newtheorem{prop}[thm]{Proposition}
\newtheorem{defn}[thm]{Definition}
\theoremstyle{definition}
\newtheorem{notation}{Notation}
\newcommand{\nn}{\mathbb{N}}
\newcommand{\ee}{\varepsilon}
\newcommand{\meg}{\geqslant}
\newcommand{\mik}{\leqslant}
\newcommand{\con}{\smallfrown}
\begin{document}

\title{Subsets of Products of Finite Sets of Positive Upper Density}
\author{Stevo Todorcevic}
\author{Konstantinos Tyros}

\address{Department of Mathematics, University of Toronto, Toronto, Canada, M5S 2E4}
\address{and}
\address{Institute Math\'ematiques de Jussieu, CNRS UMR 7586, 2 Place Jussieu- Case
7012, 72251 Paris, Cedex 05, France. }
\email{stevo@math.toronto.edu}
\address{and}
\address{Department of Mathematics, University of Toronto, Toronto, Canada, M5S 2E4 }
\email{ktyros@math.toronto.edu}

\thanks{\textit{Key words}: Density, Ramsey theory, finite sets}
\thanks{The first named author is supported by grants from NSERC and CNRS}
\begin{abstract}
In this note we prove that for every sequence $(m_q)_{q}$ of positive integers and for every real $0<\delta\leqslant1$ there is a sequence
$(n_q)_{q}$ of positive integers such that for every sequence $(H_q)_{q}$ of finite sets such that $|H_q|=n_q$ for every $q\in\mathbb{N}$ and for every $D\subseteq \bigcup_k\prod_{q=0}^{k-1}H_q$ with the property that
$$\limsup_k \frac{|D\cap \prod_{q=0}^{k-1} H_q|}{|\prod_{q=0}^{k-1}H_q|}\geqslant\delta$$
there is a sequence $(J_q)_{q}$, where $J_q\subseteq H_q$ and $|J_q|=m_q$ for all $q$, such that $\prod_{q=0}^{k-1}J_q\subseteq D$ for infinitely many $k.$ This gives us a density version of a well-known Ramsey-theoretic result. We also give some estimates on the sequence $(n_q)_{q}$
in terms of the sequence of $(m_q)_{q}$.
\end{abstract}
\maketitle

\section{Introduction}

It is well known that many Ramsey-theoretic results admit density versions. For example, Szemeredi's theorem \cite{Sz} which is just a density version of a much older and as famous Ramsey-theoretic result of van der Wearden \cite{vW}.  Density versions of Ramsey-theoretic results tend to be considerably harder to prove. A typical such example is the density version of the Hales-Jewett theorem \cite{HJ} due to Furstenberg and Katznelson \cite{FK} (see also \cite{Po}). All these examples belong to finite Ramsey theory in the sense of \cite{GRS}. However, we have  recently  seen that infinite Ramsey-theoretic results sometimes do admit density version in spite of the well-known fact that the original infinite  Ramsey theorem itself (\cite{Ra}) does not have a density version (see, for example, \cite{EG}). Recently, solving an old problem of Laver,  Dodos, Kanellopoulos and Karagiannis \cite{DKK}, have proved a density version of the famous Halpern-L\"auchli theorem \cite{HL}.  This result has in turn triggered  investigations of density versions of the finite form of the Halpern-L\"auchli theorem (see \cite{DKT}). In this note we prove a density version of another infinite Ramsey-theoretic result,  a Ramsey type result about products of finite sets (see \cite{DLT}, \cite{To}, Chapter 3)  stating that for every infinite sequence $(m_q)_{q}$ of positive integers there is a sequence $(n_q)_{q}$ of positive integers such that for every sequence $(H_q)_{q}$ of finite sets such that $|H_q|=n_q$ for all $q\in\nn$ and for every coloring $c:\bigcup_k\prod_{q=0}^{k-1}H_q\rightarrow \{0,1\}$ there exist $J_q\subseteq H_q$ such that $|J_q|=m_q$ for all $q\in\nn$ and such that $c$ is constant on $\prod_{q=0}^{k-1}J_q$ for infinitely many $k.$ More precisely, in this paper we prove the following density result, where by $\nn$ we denote the set of the natural numbers starting by 0 and by $\nn_+$ we denote the set of the positive natural numbers.

\begin{thm}
  \label{main_result}
  Let $\delta$ be a real number with $0<\delta\mik1$. Then there exists a map $V_{\delta}:\nn_+^{<\nn}\times\nn_+^{<\nn}\to\nn$ with the following property. For every sequence $(m_q)_{q}$ of positive integers, infinite sequence $(H_q)_q$ of finite sets,  $L$ infinite subset of $\nn_+$ and sequence $(D_k)_{k\in L}$ such that
  \begin{enumerate}
    \item[(a)] $|H_0|\meg V_\delta\big((m_0),\emptyset\big)$ and $|H_q|\meg V_\delta\big((m_p)_{p=0}^q,(|H_p|)_{p=0}^{q-1}\big)$ for all $q\in\nn_+$, and
    \item[(b)] $D_k$ is a subset of $\prod_{q=0}^{k-1}H_q$ of density at least $\delta$, for all $k\in L$,
  \end{enumerate}
  there exist a sequence of finite sets $(I_q)_q$ and an infinite subset $L'$ of $L$ such that
  \begin{enumerate}
    \item[(i)] $I_q\subseteq H_q$ and $|I_q|=m_q$ for all $q\in\nn$ and
    \item[(ii)] $\prod_{q=0}^{k-1}I_q\subseteq D_k$ for all $k\in L'$.
  \end{enumerate}
\end{thm}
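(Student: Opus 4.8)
The plan is to isolate a single ``one coordinate at a time'' reduction, iterate it to read off the function $V_\delta$, and then harvest an infinite set of levels by a fusion.

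\emph{The basic step.} Suppose $(D_k)_{k\in L}$ is given with each $D_k\subseteq\prod_{q<k}H_q$ of density $\meg\delta$. For $h\in H_0$ and $k\in L$ write $D_k(h)=\{t\in\prod_{1\mik q<k}H_q:\ h\con t\in D_k\}$ (with the usual convention that an empty product is $\{\emptyset\}$, so for $k=1$ this is $\{\emptyset\}$ if $h\in D_1$ and $\emptyset$ otherwise). I claim there is a bound $N(\delta,m_0)$, depending only on $\delta$ and $m_0$, such that if $|H_0|\meg N(\delta,m_0)$ then for every $k\in L$ at least a proportion $\delta^{m_0}/4$ of the $m_0$-element subsets $I$ of $H_0$ satisfy $\dens\big(\bigcap_{h\in I}D_k(h)\big)\meg\delta^{m_0}/4$. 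This is a double counting: fixing $k$, averaging over uniformly chosen $m_0$-subsets $I$ and writing $c_t=|\{h\in H_0:\ h\con t\in D_k\}|$,
\[
\mathbb{E}_I\,\dens\Big(\bigcap_{h\in I}D_k(h)\Big)=\frac{1}{\binom{|H_0|}{m_0}}\,\mathbb{E}_t\big[\binom{c_t}{m_0}\big]\meg\frac{\binom{\delta|H_0|}{m_0}}{\binom{|H_0|}{m_0}}\meg\frac{\delta^{m_0}}{2},
\]
where the first inequality is convexity of $x\mapsto\binom{x}{m_0}$ together with $\mathbb{E}_t c_t=\dens(D_k)\cdot|H_0|\meg\delta|H_0|$, and the last holds once $|H_0|\meg N(\delta,m_0)$; Markov's inequality gives the stated proportion. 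Since $H_0$ has only finitely many $m_0$-subsets, picking for each $k\in L$ one subset witnessing the claim and applying the pigeonhole yields a single $I_0\subseteq H_0$ with $|I_0|=m_0$ and an infinite $L_1\subseteq L$ such that $D_k^{(1)}:=\bigcap_{h\in I_0}D_k(h)$ has density $\meg\delta_1:=\delta^{m_0}/4$ for every $k\in L_1$; note $D_k^{(1)}=\{t:\ h\con t\in D_k\text{ for all }h\in I_0\}$.

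\emph{Iteration and the definition of $V_\delta$.} Put $\delta_0=\delta$ and $\delta_{q+1}=\delta_q^{m_q}/4$, so that $\delta_q$ depends only on $\delta,m_0,\dots,m_{q-1}$, and set $V_\delta\big((m_p)_{p=0}^q,(|H_p|)_{p=0}^{q-1}\big):=N(\delta_q,m_q)$, which in fact depends only on $\delta,m_0,\dots,m_q$. Under hypothesis (a), apply the basic step to the $0$-th coordinate, then to the $1$-st coordinate on the family $(D_k^{(1)})_{k\in L_1,\,k>1}$, and so on: at stage $q$ one obtains $I_q\subseteq H_q$ with $|I_q|=m_q$ and an infinite $L_{q+1}\subseteq L_q$ so that, for every $k\in L_{q+1}$, the section
\[
D_k^{(q+1)}:=\Big\{t\in\textstyle\prod_{q+1\mik j<k}H_j:\ \textstyle\prod_{j<q+1}I_j\times\{t\}\subseteq D_k\Big\}
\]
has density $\meg\delta_{q+1}$. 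In particular, whenever $q\in L_q$ the empty product $\prod_{q\mik j<q}H_j=\{\emptyset\}$ forces $D_q^{(q)}=\{\emptyset\}$, i.e. $\prod_{j<q}I_j\subseteq D_q$. Hence it suffices to arrange $q\in L_q$ for infinitely many $q$, and then take $L'=\{q:\ q\in L_q\}$ (so $L'\subseteq L$, and (i), (ii) hold).

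\emph{The main obstacle: the fusion.} Securing $q\in L_q$ for infinitely many $q$ is the delicate point. A greedy choice of $L_{q+1}$ — any infinite set of levels for which the chosen $I_q$ is ``good'' — can discard every level before it is reached (for instance if $\min L_q$ outgrows $q$), so the sets $L_q$ must be built by a fusion rather than greedily. The strategy I would pursue is to carry, alongside $L_q$, a finite list of ``protected'' levels that we intend to close off, and to use the positive-proportion form of the basic step to pick $I_q$ keeping the protected levels alive (sections still of density $\meg\delta_{q+1}$) while leaving infinitely many reserve levels. The genuine difficulty is that being good for a prescribed level and for infinitely many others need not be simultaneously achievable by one $I_q$, so the construction must accommodate abandoning a protected level; the point to establish is that an abandonment is forced only together with a genuine increase of the density available on the surviving levels, hence can happen only finitely often since densities are $\mik1$, after which one closes off the remaining levels in turn. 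Making this protection/abandonment bookkeeping go through is where essentially all the work of the infinite case lies.
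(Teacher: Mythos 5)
Your basic step and its iteration are sound and match the finite content of the paper (they are essentially Lemma \ref{correlation} and Lemma \ref{T1_property_prequel}: a convexity/counting argument produces, for each level $k$, a positive proportion of good $m_q$-subsets, and a pigeonhole over the finitely many $m_q$-subsets of $H_q$ stabilizes one of them along an infinite set of levels). The genuine gap is exactly where you locate it: arranging $q\in L_q$ for infinitely many $q$. This is not a bookkeeping detail that can be deferred --- it is the entire difficulty of the theorem, and the protection/abandonment scheme you sketch does not close it. Your proposed termination criterion (``an abandonment is forced only together with a genuine increase of the density available on the surviving levels, hence can happen only finitely often since densities are $\mik 1$'') has no monotone quantity behind it: the densities you are tracking are the $\delta_q$, which by construction decay to $0$ as $q\to\infty$ (indeed $\delta_{q+1}=\delta_q^{m_q}/4$), so increments measured against a vanishing scale can occur unboundedly often, and an increment gained at coordinate $q$ is consumed by the very next application of the basic step. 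Moreover, as you note, at a given stage the set of $m_q$-subsets good for a protected level and the set good for infinitely many reserve levels each have proportion only about $\delta_q^{m_q}/4\ll 1/2$, so they may well be disjoint; nothing in your setup forces the conflict to resolve favorably.

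The paper circumvents this by abandoning the level-by-level fusion altogether. It introduces, for each position $i$ and each $\Gamma\subseteq\prod_{q<i}H_q$ of positive density, the family $\mathcal{F}(i,\Gamma,(D_k)_{k\in L})$ of \emph{finite} sets $F$ of levels for which a single sequence $(I_q)_{q=i}^{\max F-1}$ works simultaneously, and proves by transfinite induction on countable ordinals $\xi$ (property $\mathcal{P}(\xi)$, Lemma \ref{induction_consequence_lemma}) that such a family can always be found with hereditary rank at least $\xi$; the inductive step trades a small loss of density (controlled by the dyadic grid $\Delta_{s_\delta+k}$, which is why $V_\delta$ takes a maximum over $\theta<\ee$ in $\Delta_{s_\delta+k}$) for an increase of rank. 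Since this holds for every $\xi<\omega_1$, a stabilization argument produces one such family of rank $\omega_1$, i.e.\ a non-compact hereditary family, which therefore contains all finite subsets of some infinite $M\subseteq L$; a diagonal/compactness extraction then yields the single $(I_q)_q$ and $L'$. So the ordinal-rank machinery is precisely the replacement for the fusion you could not carry out, and without it (or some equally substantial substitute) your argument proves only the finitary statement, not Theorem \ref{main_result}.
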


For the definition of the map $V_\delta$, we will need some auxiliary maps. In the next section we define these maps and we describe their properties.

\section{The definition of the map $V_\delta$.}

For every $0<\theta<\ee\mik 1$ and every integer $k\meg 2$ we set
\[
\Sigma(\theta,\ee,k)=\Big\lceil \frac{k(k-1)}{2(\ee^k-\theta^k)}\Big\rceil.
\]
We will need the following elementary fact. Although it is well known, we could not find a reference and we include its proof
for the convenience of the reader.
\begin{lem} \label{correlation}
Let $0< \theta< \ee\mik 1$ and $k,N\in\nn$ with $k\meg 2$ and $N\meg\Sigma(\theta,\ee,k)$. Also let $(A_i)_{i=1}^N$ be a family of measurable
events in a probability space $(\Omega,\Sigma,\mu)$ such that $\mu(A_i)\meg\ee$ for all $1\mik i\mik N$. Then there exists a subset $F$ of $\{1,\ldots,N\}$
of cardinality $k$ such that
\[
\mu\Big( \bigcap_{i\in F} A_i\Big) \meg \theta^k.
\]
\end{lem}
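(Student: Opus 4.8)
The plan is to run a standard averaging argument over the $k$-element subsets of $\{1,\dots,N\}$. Write $\mathbf 1_{A_i}$ for the indicator of $A_i$ and put $f=\sum_{i=1}^N\mathbf 1_{A_i}$, a function from $\Omega$ to $\{0,1,\dots,N\}$. Because the indicators take only the values $0$ and $1$ we have the pointwise identity $\sum_{|F|=k}\prod_{i\in F}\mathbf 1_{A_i}=\binom fk$, whence
\[
\sum_{\substack{F\subseteq\{1,\dots,N\}\\ |F|=k}}\mu\Big(\bigcap_{i\in F}A_i\Big)=\int_\Omega\binom{f(\omega)}{k}\,d\mu(\omega).
\]
Thus it suffices to show the right-hand side is at least $\binom Nk\theta^k$: indeed, if every intersection $\bigcap_{i\in F}A_i$ had measure strictly below $\theta^k$, the left-hand side --- a sum of $\binom Nk$ terms --- would be strictly below $\binom Nk\theta^k$, and here $\binom Nk\geq1$ since $\ee^k-\theta^k<1$ forces $\Sigma(\theta,\ee,k)\geq k$.

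To estimate the integral I would invoke Jensen's inequality. The map $x\mapsto\binom xk$ is convex and non-decreasing on $[k-1,\infty)$, so extending it by $0$ on $[0,k-1]$ gives a convex non-decreasing function $g$ on $[0,\infty)$ with $g(m)=\binom mk$ for every integer $m\geq0$. Since $\int_\Omega f\,d\mu=\sum_{i=1}^N\mu(A_i)\geq\ee N$, Jensen yields
\[
\int_\Omega\binom{f(\omega)}{k}\,d\mu(\omega)=\int_\Omega g\big(f(\omega)\big)\,d\mu(\omega)\;\geq\;g\Big(\int_\Omega f\,d\mu\Big)\;\geq\;g(\ee N).
\]
A one-line computation from $N\geq\Sigma(\theta,\ee,k)$ shows $\ee N\geq k-1$ (using $k\geq2$ and $\ee^k-\theta^k<\ee^k\leq\ee$), so $g(\ee N)=\binom{\ee N}{k}$. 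After dividing by $\binom Nk>0$, the whole lemma comes down to the inequality
\[
\prod_{j=0}^{k-1}\frac{\ee N-j}{N-j}=\frac{\binom{\ee N}{k}}{\binom Nk}\;\geq\;\theta^k.
\]

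To prove this I would write each factor as $\ee(1-x_j)$ with $x_j=\frac{j(1-\ee)}{\ee(N-j)}$; one checks $0\leq x_j\leq1$ (again from $\ee N\geq k-1$). The elementary bound $\prod_{j}(1-x_j)\geq1-\sum_j x_j$ together with $\sum_{j=0}^{k-1}x_j\leq\frac{(1-\ee)k(k-1)}{2\ee(N-k+1)}$ then gives
\[
\prod_{j=0}^{k-1}\frac{\ee N-j}{N-j}\;\geq\;\ee^k\Big(1-\frac{(1-\ee)k(k-1)}{2\ee(N-k+1)}\Big),
\]
and the right-hand side is $\geq\theta^k$ as soon as $N\geq k-1+\frac{(1-\ee)k(k-1)\ee^{k-1}}{2(\ee^k-\theta^k)}$. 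Since $\Sigma(\theta,\ee,k)\geq\frac{k(k-1)}{2(\ee^k-\theta^k)}$, it is enough to verify $\frac{k(k-1)}{2(\ee^k-\theta^k)}\geq k-1+\frac{(1-\ee)k(k-1)\ee^{k-1}}{2(\ee^k-\theta^k)}$, which after clearing denominators becomes $k\big(1-\ee^{k-1}+\ee^k\big)\geq2(\ee^k-\theta^k)$; this follows since $2(\ee^k-\theta^k)<2\ee^k$ while $k-k\ee^{k-1}+(k-2)\ee^k\geq0$, both terms being non-negative because $0<\ee\leq1$ and $k\geq2$. The real content is the averaging identity and Jensen's inequality; the only genuine obstacle is the bookkeeping in this last paragraph, whose point is simply to check that $\Sigma(\theta,\ee,k)$ was taken large enough to swallow the gap between $\binom{\ee N}{k}/\binom Nk$ and $\ee^k$. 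Once that quantitative check is done, some $F$ of size $k$ must satisfy $\mu\big(\bigcap_{i\in F}A_i\big)\geq\theta^k$, as required.
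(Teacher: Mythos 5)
Your argument is correct, and it reaches the same conclusion with the same threshold $\Sigma(\theta,\ee,k)$, but it follows a genuinely different route from the paper's proof. Both proofs hinge on Jensen's inequality applied to $f=\sum_{i=1}^N\mathbf{1}_{A_i}$, but you apply it to the (extended) convex function $g(t)=\binom{t}{k}$ and expand over unordered $k$-subsets $F$ via the exact identity $\sum_{|F|=k}\prod_{i\in F}\mathbf{1}_{A_i}=\binom{f}{k}$, whereas the paper applies Jensen to $t\mapsto t^k$, expands $\big(\sum_i\mathbf{1}_{A_i}\big)^k$ over \emph{ordered} tuples $\sigma:[k]\to[N]$, and then discards the non-injective tuples, whose contribution it bounds crudely by $\frac{k(k-1)}{2}N^{k-1}$. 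The two approaches pay in different places: your unordered count has no overcounting from repeated indices, so no error term appears at the expansion stage, but you then have to push through the ratio estimate $\binom{\ee N}{k}/\binom{N}{k}\ge\theta^k$ (via the Weierstrass inequality $\prod(1-x_j)\ge 1-\sum x_j$ plus a final arithmetic check that $\Sigma(\theta,\ee,k)$ is large enough); the paper's route needs no binomial manipulations at all, since after removing the non-injective tuples it reads off the average of $\mu(\bigcap_{i\in F}A_i)$ over injective $\sigma$ directly. Your auxiliary claims all check out: $\ee N\ge k-1$ follows from $N\ge\Sigma$ and $\ee^k-\theta^k<\ee$; the extension of $\binom{\cdot}{k}$ by $0$ on $[0,k-1]$ is indeed convex and non-decreasing because the one-sided derivatives at $k-1$ compare correctly ($0\le 1/k$); and the final inequality $k(1-\ee^{k-1}+\ee^k)\ge 2(\ee^k-\theta^k)$ reduces to the nonnegativity of $k(1-\ee^{k-1})$ and $(k-2)\ee^k$. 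So the proposal is a valid alternative proof; the paper's version is shorter, while yours avoids the ordered-tuple device at the cost of a slightly longer quantitative tail.
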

\begin{proof}
Let $\mathcal{A}$ be the set of all functions $\sigma:[k]\to [N]$ and $\mathcal{B}=\{\sigma\in\mathcal{A}:\sigma \text{ is 1-1}\}$. Notice that
\begin{equation} \label{2pr-e3}
|\mathcal{A}\setminus\mathcal{B}|\mik \frac{k(k-1)}{2} N^{k-1}.
\end{equation}
By our assumptions and Jensen's inequality, we see that
\begin{equation}
  \begin{split}
  \label{2pr-e4}
    \ee^kN^k\mik &\Big( \int \sum_{i=1}^{N} \mathbf{1}_{A_i} d\mu \Big)^k \mik\int\Big(\sum_{i=1}^{N} \mathbf{1}_{A_i}\Big)^k d\mu
    =  \int \sum_{\sigma\in\mathcal{A}} \prod_{i=1}^{k} \mathbf{1}_{A_{\sigma(i)}} d\mu\\
= & \sum_{\sigma\in\mathcal{A}} \mu\Big( \bigcap_{i=1}^{k} A_{\sigma(i)}\Big)
  = \sum_{\sigma\in\mathcal{A}\setminus \mathcal{B}} \mu\Big( \bigcap_{i=1}^{k} A_{\sigma(i)}\Big) +
\sum_{\sigma\in\mathcal{B}} \mu\Big( \bigcap_{i=1}^{k} A_{\sigma(i)}\Big) \\
 \stackrel{\eqref{2pr-e3}}{\mik} & \frac{k(k-1)}{2} N^{k-1} +
\sum_{\sigma\in\mathcal{B}} \mu\Big( \bigcap_{i=1}^{k} A_{\sigma(i)}\Big).
  \end{split}
\end{equation}
Since $N\meg \Sigma(\theta,\ee,k)$ we get
\[
\frac{1}{|\mathcal{B}|} \sum_{\sigma\in\mathcal{B}}\ \mu \Big(\bigcap_{i=1}^k A_{\sigma(i)}\Big) \meg
\frac{1}{N^k} \sum_{\sigma\in\mathcal{B}} \mu \Big(\bigcap_{i=1}^k A_{\sigma(i)}\Big)
\stackrel{\eqref{2pr-e4}}{\meg} \ee^k- \frac{1}{N}\cdot \frac{k(k-1)}{2}\meg \theta^k
\]
and the proof is completed.
\end{proof}

For every $0<\ee\mik1$ we define the map $T_\ee:\nn_+^{<\nn}\to\nn$ as follows. For every $(m_q)_{q=0}^k$ in $\nn_+^{<\nn}$ we set
\begin{equation}
  \label{e02}
  T_\ee\big((m_q)_{q=0}^k\big)=\frac{2}{\ee'}\Sigma(\ee'/4,\ee'/2,m_k),
\end{equation}
where $\ee'=\ee$ if $k=0$ and $\ee'=\ee^{\prod_{q=0}^{k-1}m_q}2^{-2\sum_{j=0}^{k-1}\prod_{q=j}^{k-1}m_q}$ otherwise. The crucial property characterizing the map $T_\ee$ is given by the following lemma. For its proof we will need the following notation.

\begin{notation}
  Let $k_0<k_1< k_2$ be nonnegative integers and $(H_q)_q$ be a sequence of nonempty finite sets. Also let $x\in\prod_{q=k_0}^{k_1-1}H_q$ and $y\in\prod_{q=k_1}^{k_2-1}H_q$. By $x^\con y$ we denote the concatenation of the sequences $x,y$, i.e. the sequence $z\in\prod_{q=k_0}^{k_2-1}H(q)$ satisfying $z(q)=x(q)$ for all $q=k_0,\ldots,k_1-1$ and $z(q)=y(q)$ for all $q=k_1,\ldots,k_2-1$. Moreover, for
  $A\subseteq\prod_{q=k_0}^{k_1-1}H_q$ and $B\subseteq\prod_{q=k_1}^{k_2-1}H_q$ we set
  \[x^\con B=\{x^\con y:y\in B\}\]
  and
  \[A^\con B=\bigcup_{x\in A}x^\con B.\]
\end{notation}

\begin{lem}
  \label{T1_property_prequel}
  Let $k\in\nn$, $\ee$ be a real with $0<\ee\mik1$ and $m_0,\ldots,m_k$ be positive integers. Also let $(H_q)_{q=0}^k$ be a finite sequence of nonempty finite sets, satisfying $|H_q|\meg T_\ee\big((m_p)_{p=0}^q\big)$ for all $q=0,\ldots,k$ and a subset $D$ of $\prod_{q=0}^k H_q$ of density at least $\ee$. Then there exists a sequence $(I_q)_{q=0}^k$ such that
  \begin{enumerate}
    \item[(i)] $I_q\subseteq H_q$ and $|I_q|=m_q$ for all $q=0,\ldots,k$ and
    \item[(ii)] $\prod_{q=0}^k I_q\subseteq D$.
  \end{enumerate}
\end{lem}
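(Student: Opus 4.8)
The plan is to argue by induction on $k$, peeling off the coordinates from left to right, the two ingredients being Lemma~\ref{correlation} and the recursive shape built into the definition~\eqref{e02} of $T_\ee$. For $k=0$ nothing is needed beyond counting: $|D|\meg\ee|H_0|\meg\ee\,T_\ee\big((m_0)\big)=2\Sigma(\ee/4,\ee/2,m_0)\meg m_0$ when $m_0\meg2$, so any $m_0$-element subset $I_0\subseteq D$ works (and when $m_0=1$ one only needs $D\neq\emptyset$, which holds since $\dens(D)\meg\ee>0$).

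For the inductive step, assume the lemma holds with $k$ replaced by $k-1$, for arbitrary finite sequences of sets and admissible parameters, and let $D\subseteq\prod_{q=0}^kH_q$ have density $\meg\ee$. Put $\Omega=\prod_{q=1}^kH_q$ with its normalized counting measure $\mu$, and for $h\in H_0$ let $D^h=\{y\in\Omega:(h)^\con y\in D\}$, so that $\frac{1}{|H_0|}\sum_{h\in H_0}\mu(D^h)=\dens(D)\meg\ee$. A one-line Markov estimate then shows that $G=\{h\in H_0:\mu(D^h)\meg\ee/2\}$ satisfies $|G|\meg\frac{\ee}{2}|H_0|\meg\frac{\ee}{2}T_\ee\big((m_0)\big)=\Sigma(\ee/4,\ee/2,m_0)$. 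Applying Lemma~\ref{correlation} inside $(\Omega,\mu)$ to the family $\{D^h:h\in G\}$, with $\theta=\ee/4$, with its ``$\ee$'' taken to be $\ee/2$, and with its ``$k$'' taken to be $m_0$, we obtain $I_0\subseteq G$ with $|I_0|=m_0$ and $\mu\big(\bigcap_{h\in I_0}D^h\big)\meg(\ee/4)^{m_0}$ (if $m_0=1$, simply pick any single $h\in G$).

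Set $D_1=\bigcap_{h\in I_0}D^h\subseteq\Omega$ and $\ee_1=(\ee/4)^{m_0}=\ee^{m_0}2^{-2m_0}$, so $\dens(D_1)\meg\ee_1$. The crucial observation is that, using the elementary identities $m_0\prod_{p=1}^{q-1}m_p=\prod_{p=0}^{q-1}m_p$ and $m_0\prod_{p=1}^{q-1}m_p+\sum_{j=1}^{q-1}\prod_{p=j}^{q-1}m_p=\sum_{j=0}^{q-1}\prod_{p=j}^{q-1}m_p$, one reads off from~\eqref{e02} that
\[
T_{\ee_1}\big((m_p)_{p=1}^q\big)=T_\ee\big((m_p)_{p=0}^q\big)\mik|H_q|\qquad\text{for }q=1,\dots,k.
\]
Thus the inductive hypothesis applies to the sequence $(H_q)_{q=1}^k$, the integers $m_1,\dots,m_k$, the parameter $\ee_1$ and the set $D_1$ (re-indexing is harmless since $T_\ee$ depends only on the tuple of values), and produces $I_q\subseteq H_q$ with $|I_q|=m_q$ for $q=1,\dots,k$ and $\prod_{q=1}^kI_q\subseteq D_1$. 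Since $(h)^\con y\in D$ for every $h\in I_0$ and every $y\in D_1$, this gives $\prod_{q=0}^kI_q\subseteq D$, completing the induction.

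The only point genuinely requiring care — and the very reason~\eqref{e02} has the form it has — is the density bookkeeping displayed above: peeling off the first coordinate degrades the density from $\ee$ to $(\ee/4)^{m_0}$, and $T_\ee$ has been calibrated precisely so that this deteriorated parameter, fed to the shortened tuple, regenerates exactly the thresholds already assumed on $|H_1|,\dots,|H_k|$. Everything else — the averaging step and the single invocation of Lemma~\ref{correlation} — is routine, and the degenerate cases $m_q=1$ never need more than selecting one element in place of an appeal to the correlation lemma.
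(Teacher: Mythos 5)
Your proof is correct and is essentially the same as the paper's: both peel off one coordinate at a time via the Markov averaging step followed by Lemma~\ref{correlation}, degrading the density from $\ee_q$ to $(\ee_q/4)^{m_q}$, and both hinge on the observation that $T_\ee$ was calibrated so that the degraded parameter on the shortened tuple reproduces the original thresholds. The paper packages this as a single explicit iterated construction of $(I_q,D_q)_{q=0}^{k}$ whereas you phrase it as a formal induction on $k$, but your identity $T_{\ee_1}\big((m_p)_{p=1}^q\big)=T_\ee\big((m_p)_{p=0}^q\big)$ is precisely the paper's observation that $\ee_q$ coincides with the $\ee'$ occurring in the definition of $T_\ee\big((m_p)_{p=0}^q\big)$.
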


\begin{proof}
  If $k=0$ then the result is immediate. Indeed, in this case we have that
  \[|D|\meg\ee\cdot|H_0|\meg\ee\cdot T_{\ee}\big((m_0)\big)\meg m_0.\]
  Suppose that $k\meg1$. Let $\ee_0=\ee$ and $\ee_{q+1}=(\ee_q/4)^{m_q}$, for all $q=0,\ldots,k-1$. It is easy to check that
  \[\ee_{q}=\ee^{\prod_{p=0}^{q-1}m_p}2^{-2\sum_{j=0}^{q-1}\prod_{p=j}^{q-1}m_p},\;\text{for all}\; q=1,\ldots,k.\]
  We inductively construct sequences $(I_q)_{q=0}^{k-1}$ and $(D_q)_{q=0}^{k}$ such that for every $q=0,\ldots,k-1$ we have
  \begin{enumerate}
    \item[(a)] $I_q\subset H_q$ and $|I_q|=m_q$,
    \item[(b)] $D_q\subset\prod_{p=q}^kH_q$ of density at least $\ee_q$ and
    \item[(c)] $I_q^\con D_{q+1}\subseteq D_q$.
  \end{enumerate}
  We set $D_0=D$ and assume that for some $q\in\{0,\ldots,k-1\}$, $(D_p)_{p=0}^q$ and if $q\meg1$ also $(I_p)_{p=0}^{q-1}$ have been constructed satisfying (a),(b) and (c) above. For every $x\in H_q$, we set
  \[D'_x=\{y\in\prod_{p=q+1}^k H_p:(x)^\con y\in D_q\}.\]
  Clearly the average of the densities of $D'_x$ as $x$ runs in $H_q$ is equal to the density of $D_q$ and therefore, by the inductive assumption (b) above, at least $\ee_q$. By a Fubini type argument, we have that the set
  \[B=\{x\in H_q:\;D'_x\;\text{is of density at least}\;\ee_q/2\}\]
  is of density at least $\ee_q/2$ in $H_q$. Thus
  \[\begin{split}
    |B|\meg(\ee_q/2)\;|H_q|\meg(\ee_q/2)\;T_\ee\big((m_p)_{p=0}^q\big)=\Sigma(\ee_q/4,\ee_q/2,m_q).
  \end{split}\]
  By Lemma \ref{correlation} there exists $I_q\subseteq B\subseteq H_q$ of cardinality $m_q$ such that $D_{q+1}=\cap_{x\in I_q}D'_x$ is of density at least $e_{q+1}$. The inductive step of the construction is complete.

  Using property (c) above it is easy to see that $I_1^\con\ldots^\con I_{k-1}\;^\con D_k\subseteq D$, where $D_k$ is subset of $H_k$ of density at least $\ee_k$. Thus
  \[|D_k|\meg\ee_k\;T_{\ee}\big((m_p)_{p=0}^k\big)\meg m_k\]
  and therefore we may pick $I_k\subset D_k\subset H_k$ of cardinality $m_k$. Clearly $(I_q)_{q=0}^k$ is the desired one and the proof is complete.
\end{proof}

\begin{defn}
  Let $(H_q)_q$ be a sequence of nonempty finite sets, $L$ be an infinite subset of $\nn_+$, $k_0\in\nn$ and $0<\ee\mik 1$. We will say that the sequence $(D_k)_{k\in L}$ is $(k_0,\ee)$-dense in $(H_q)_q$ if for every $k> k_0$ in $L$ we have that $D_k$ is a subset of $\prod_{q=k_0}^{k-1}H_q$ of density at least $\ee$, i.e.
  \[\frac{|D_k|}{|\prod_{q=k_0}^{k-1}H_q|}\meg\ee.\]
  Finally, we will say that $(D_k)_{k\in L}$ is $\ee$-dense in $(H_q)_q$, if it is $(0,\ee)$-dense in $(H_q)_q$.
\end{defn}
The next result is an immediate consequence of Lemma \ref{T1_property_prequel}.
\begin{cor}
  \label{T1_property}
  Let $(m_q)_{q}$ be a sequence of positive integers, $\ee$ be a real number with $0<\ee\mik1$, $k_0$ be a nonnegative integer and $(H_q)_q$ be a sequence of nonempty finite sets satisfying $|H_k|\meg T_\ee\big((m_q)_{q=k_0}^k\big)$ for all integers $k$ with $k\meg k_0$. Also let $L$ be an infinite subset of $\nn_+$ and $(D_k)_{k\in L}$ be $(k_0,\ee)$-dense in $(H_q)_q$. Then for every $k\in L$ with $k> k_0$ there exists a finite sequence $(I_q^k)_{q=k_0}^{k-1}$ of finite sets satisfying
  \begin{enumerate}
    \item[(i)] $I_q^k\subseteq H_q$ and $|I_q^k|=m_q$, for all $q=0,\ldots,k-1$ and
    \item[(ii)] $\prod_{q=0}^{k-1} I_q^k\subseteq D_k$.
  \end{enumerate}
\end{cor}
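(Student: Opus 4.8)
The plan is to fix an arbitrary $k\in L$ with $k>k_0$ and simply reindex the relevant finite segment of the data so that Lemma \ref{T1_property_prequel} applies verbatim. Since $k_0$ is fixed throughout, set $\ell=k-1-k_0\in\nn$ and define the reindexed finite sequences $\tilde H_j=H_{k_0+j}$ and $\tilde m_j=m_{k_0+j}$ for $j=0,\ldots,\ell$. Then $\prod_{j=0}^{\ell}\tilde H_j$ is literally the same set as $\prod_{q=k_0}^{k-1}H_q$, so the hypothesis that $(D_k)_{k\in L}$ is $(k_0,\ee)$-dense in $(H_q)_q$ says precisely that $D_k$ is a subset of $\prod_{j=0}^{\ell}\tilde H_j$ of density at least $\ee$.

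Next I would check the size hypothesis. Because the map $T_\ee$ is defined on $\nn_+^{<\nn}$ and depends only on the tuple of entries fed to it (not on any absolute position), for each $j\in\{0,\ldots,\ell\}$ we have
\[
T_\ee\big((\tilde m_p)_{p=0}^{j}\big)=T_\ee\big((m_q)_{q=k_0}^{k_0+j}\big),
\]
and the assumption $|H_{k_0+j}|\meg T_\ee\big((m_q)_{q=k_0}^{k_0+j}\big)$ — which is the instance of the corollary's hypothesis for the integer $k_0+j\meg k_0$ — becomes $|\tilde H_j|\meg T_\ee\big((\tilde m_p)_{p=0}^{j}\big)$. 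Thus the finite sequence $(\tilde H_j)_{j=0}^{\ell}$, the positive integers $\tilde m_0,\ldots,\tilde m_\ell$, and the set $D_k$ satisfy exactly the hypotheses of Lemma \ref{T1_property_prequel}, with $\ell$ in place of the $k$ there and the same $\ee$.

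Applying that lemma yields a sequence $(\tilde I_j)_{j=0}^{\ell}$ with $\tilde I_j\subseteq\tilde H_j$, $|\tilde I_j|=\tilde m_j$, and $\prod_{j=0}^{\ell}\tilde I_j\subseteq D_k$. Undoing the reindexing, i.e. setting $I_q^k=\tilde I_{q-k_0}$ for $q=k_0,\ldots,k-1$, gives finite sets with $I_q^k\subseteq H_q$, $|I_q^k|=m_q$, and $\prod_{q=k_0}^{k-1}I_q^k=\prod_{j=0}^{\ell}\tilde I_j\subseteq D_k$, which is (i) and (ii). Since $k\in L$ with $k>k_0$ was arbitrary, this completes the proof. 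There is essentially no obstacle here beyond keeping the index shift straight; the only point worth stating explicitly is that $T_\ee$ sees only the tuple of its arguments, which is what makes the translated size hypothesis match the one supplied.
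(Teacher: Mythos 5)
Your proof is correct and matches the paper's approach: the paper states the corollary as ``an immediate consequence'' of Lemma \ref{T1_property_prequel}, and your reindexing argument makes explicit exactly why it is immediate, including the one point worth noting (that $T_\ee$ depends only on the tuple of its arguments, not on absolute positions). Nothing further is needed.
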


For every reals $\ee,\theta$ with $0<\theta<\ee\mik1$ and every positive integer $r$, we define the map $Q^r_{\theta,\ee}:\nn_+^{<\nn}\to\nn$ as follows. For every $(m_q)_{q=0}^k\in \nn_+^{<\nn}$, we set
\[
  Q^r_{\theta,\ee}\big((m_q)_{q=0}^k\big)=T_{\frac{1}{8}(\frac{\ee-\theta}{2^{r}})^2}\big((m_q)_{q=0}^k\big).
\]
By the definition of the the map $Q^r_{\theta,\ee}$, Corollary \ref{T1_property} has the following result as an immediate consequence.
\begin{cor}
  \label{Q_property_on_cut_cor1}
  Let $(m_q)_{q}$ be a sequence of positive integers, $\theta,\ee$ be real numbers with $0<\theta<\ee\mik1$, $k_0,r\in\nn$ with $r\meg1$ and $(H_q)_q$ be a sequence of nonempty finite sets such that
  $|H_k|\meg Q^r_{\theta,\ee}\big((m_q)_{q=k_0}^k\big)$, for all $k\meg k_0$.
  Also let $L$ be an infinite subset of $\nn_+$ and $(D_k)_{k\in L}$ be $(k_0,\frac{1}{8}(\frac{\ee-\theta}{2^{r}})^2)$-dense in  $(H_q)_q$. Then for every $k\in L$ with $k>k_0$ there exists a finite sequence $(I_q^k)_{q=k_0}^{k-1}$ of finite sets satisfying
  \begin{enumerate}
    \item[(i)] $I_q^k\subseteq H_q$ and $|I_q^k|=m_q$ for all $q=k_0,\ldots,k-1$ and
    \item[(ii)] $\prod_{q=k_0}^{k-1} I_q^k\subseteq D_k$.
  \end{enumerate}
\end{cor}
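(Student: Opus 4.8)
The plan is to observe that this corollary is nothing more than Corollary~\ref{T1_property} read through the definition of the map $Q^r_{\theta,\ee}$. Set
\[
\ee'=\frac{1}{8}\Big(\frac{\ee-\theta}{2^{r}}\Big)^2.
\]
First I would check that $0<\ee'\mik1$, which is what is needed to license applying Corollary~\ref{T1_property} with $\ee'$ playing the role of $\ee$: since $0<\theta<\ee\mik1$ we have $0<\ee-\theta<1$, and since $r\meg1$ we have $2^{r}\meg2$, whence $0<\ee'<\tfrac18<1$.

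Next, by the very definition of $Q^r_{\theta,\ee}$ we have
$Q^r_{\theta,\ee}\big((m_q)_{q=k_0}^k\big)=T_{\ee'}\big((m_q)_{q=k_0}^k\big)$
for every $k\meg k_0$. Hence the hypothesis $|H_k|\meg Q^r_{\theta,\ee}\big((m_q)_{q=k_0}^k\big)$ for all $k\meg k_0$ is literally the hypothesis $|H_k|\meg T_{\ee'}\big((m_q)_{q=k_0}^k\big)$ for all $k\meg k_0$ appearing in Corollary~\ref{T1_property}, and the assumption that $(D_k)_{k\in L}$ is $(k_0,\ee')$-dense in $(H_q)_q$ is exactly the density hypothesis of that corollary. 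Applying Corollary~\ref{T1_property} with $\ee$ replaced by $\ee'$ then produces, for every $k\in L$ with $k>k_0$, a finite sequence $(I_q^k)_{q=k_0}^{k-1}$ of finite sets with $I_q^k\subseteq H_q$, $|I_q^k|=m_q$ for all $q=k_0,\ldots,k-1$, and $\prod_{q=k_0}^{k-1}I_q^k\subseteq D_k$, which is precisely the asserted conclusion.

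There is no genuine obstacle in this step: the entire mathematical content has already been placed into Lemma~\ref{T1_property_prequel} and Corollary~\ref{T1_property}, together with the bookkeeping hard-wired into the definitions of $T_\ee$ and $Q^r_{\theta,\ee}$. The only verification required is the elementary inequality $0<\ee'\mik1$ recorded above, and it is needed only to make the invocation of Corollary~\ref{T1_property} formally legitimate. Isolating this corollary is a matter of convenience for later arguments, where the particular value $\tfrac18\big(\tfrac{\ee-\theta}{2^{r}}\big)^2$ is expected to arise from a variance- or Cauchy--Schwarz-type estimate in which a density increment $\ee-\theta$ is split over $2^{r}$ pieces.
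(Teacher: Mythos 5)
Your proof is correct and follows exactly the paper's route: the paper states this corollary is an immediate consequence of Corollary~\ref{T1_property} via the definition $Q^r_{\theta,\ee}=T_{\frac18(\frac{\ee-\theta}{2^r})^2}$, which is precisely your argument. Your added check that $0<\ee'\mik1$ is a sensible bit of bookkeeping the paper leaves implicit.
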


\begin{lem}
  \label{fubini_arg}
  Let $i,r$ be positive integers, $k_0\in\nn$ with $k_0<i$ and $\theta,\ee$ be reals with $0<\theta<\ee\mik1$. Also let $(H_q)_q$ be a sequence of nonempty finite sets satisfying $|\prod_{q=k_0}^{i-1}H_q|\mik r$, $L$ be an infinite subset of $\nn_+$ and $(D_k)_{k\in L}$ be $(k_0,\ee)$-dense in $(H_q)_q$. Then there exist $\Gamma\subset\prod_{q=k_0}^{i-1}H_q$ of density at least $\theta$, $L'$ infinite subset of $L$ and $(\widetilde{D}_k)_{k\in L'}$ being $(i,(\ee-\theta)/2^r)$-dense in $(H_q)_q$ such that $\Gamma^\con \widetilde{D}_k\subseteq D_k$ for all $k\in L'$.
\end{lem}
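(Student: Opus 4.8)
The plan is to slice each $D_k$ along the fixed block of coordinates indexed by $k_0,\ldots,i-1$ in a Fubini-type manner, and then to run a double pigeonhole argument: first, for each fixed $k$, over the finitely many ``patterns'' a slice can exhibit, and afterwards over $k$ itself, exploiting that all these patterns live inside the single finite set $\prod_{q=k_0}^{i-1}H_q$, which does not depend on $k$.

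To set things up I would write $X=\prod_{q=k_0}^{i-1}H_q$ and $M=|X|$, so that $M\mik r$ by hypothesis, and for $k\in L$ with $k>i$ I would write $Y_k=\prod_{q=i}^{k-1}H_q$, so that $\prod_{q=k_0}^{k-1}H_q=X^\con Y_k$. For each $y\in Y_k$ set $S^k_y=\{x\in X:x^\con y\in D_k\}\subseteq X$. Counting $D_k$ through its $y$-fibres gives $\sum_{y\in Y_k}|S^k_y|=|D_k|\meg\ee M|Y_k|$. Fixing $k>i$ in $L$ and letting $G_k=\{y\in Y_k:|S^k_y|\meg\theta M\}$, splitting this sum over $G_k$ and its complement and using $|S^k_y|\mik M$ on $G_k$ and $|S^k_y|<\theta M$ off $G_k$ yields $|G_k|\meg(\ee-\theta)|Y_k|$. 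Since each $S^k_y$ with $y\in G_k$ is one of at most $2^M\mik 2^r$ subsets of the fixed set $X$, the pigeonhole principle produces a set $\Gamma_k\subseteq X$ with $|\Gamma_k|\meg\theta M$ and $|\{y\in G_k:S^k_y=\Gamma_k\}|\meg\frac{\ee-\theta}{2^r}|Y_k|$.

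As $\Gamma_k$ ranges over the finite, $k$-independent family of subsets of $X$, a further pigeonhole gives a single $\Gamma\subseteq X$ and an infinite $L'\subseteq\{k\in L:k>i\}$ with $\Gamma_k=\Gamma$ for every $k\in L'$. For $k\in L'$ I would then set $\widetilde{D}_k=\{y\in Y_k:\Gamma\subseteq S^k_y\}=\bigcap_{x\in\Gamma}\{y\in Y_k:x^\con y\in D_k\}$. This set contains $\{y\in Y_k:S^k_y=\Gamma\}$, so it has density at least $(\ee-\theta)/2^r$ in $Y_k$, which makes $(\widetilde{D}_k)_{k\in L'}$ exactly $(i,(\ee-\theta)/2^r)$-dense in $(H_q)_q$; the set $\Gamma$ has density $|\Gamma|/M\meg\theta$ in $X=\prod_{q=k_0}^{i-1}H_q$; and $\Gamma^\con\widetilde{D}_k\subseteq D_k$ because every $x\in\Gamma$ satisfies $x^\con y\in D_k$ for all $y\in\widetilde{D}_k$. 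These are precisely the three assertions of the lemma.

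The only delicate point, and it is a mild one, is the bookkeeping of the constant: one must notice that the loss factor is controlled by the number of possible slice-patterns, which is at most $2^{|X|}\mik 2^r$ and involves neither the later coordinate sets nor the index $k$, and that the cut-off $\theta M$ in the definition of $G_k$ is what guarantees that a $(\ee-\theta)$-proportion of the fibres remain ``good''. The concluding infinite pigeonhole is harmless exactly because the candidate sets $\Gamma_k$ all sit inside the single finite set $\prod_{q=k_0}^{i-1}H_q$.
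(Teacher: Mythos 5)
Your proof is correct and follows essentially the same route as the paper's: slice $D_k$ along the first $i-k_0$ coordinates, use a Fubini/averaging argument to show the set of fibres with $\theta$-dense slice has density at least $\ee-\theta$, pigeonhole over the at most $2^{|X|}\mik 2^r$ possible slice-patterns to fix a single pattern $\Gamma_k$ for each $k$, and finally stabilize $\Gamma_k$ over an infinite $L'\subseteq L$. The only (harmless) cosmetic deviation is that you take $\widetilde{D}_k=\{y:\Gamma\subseteq S^k_y\}$ rather than the paper's $\{y:\Gamma_y=\Gamma\}$; your set is larger and still satisfies both the density bound and $\Gamma^\con\widetilde{D}_k\subseteq D_k$.
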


\begin{proof}
  By passing to a final segment of $L$, if it is necessary, we may assume that $\min L> i$. For every $k\in L$ we have the following. For every $y\in\prod_{q=i}^{k-1}H_q$ we set $\Gamma_y=\{x\in\prod_{q=k_0}^{i-1}H_q:x^\con y\in D_k\}$. Observe that the average of the densities of $\Gamma_y$ as $y$ runs in $\prod_{q=i}^{k-1}H_q$ is equal to the density of $D_k$ and therefore at least $\ee$. By a Fubini type argument we have that the set
  \[D_k'=\Big\{y\in\prod_{q=i}^{k-1}H_q:\Gamma_y\;\text{is of density at least}\;\theta\;\text{in}\;\prod_{q=k_0}^{i-1}H_q\Big\}\]
  is of density at least $\ee-\theta$ in $\prod_{q=i}^{k-1}H_q$. Since the powerset of $\prod_{q=k_0}^{i-1}H_q$ is of cardinality at most $2^r$, we have that there exist $\Gamma_k\subseteq \prod_{q=k_0}^{i-1}H_q$ of density at least $\theta$ and a subset $\widetilde{D}_k$ of $D'_k$ of density at least $(\ee-\theta)/2^r$ in $\prod_{q=i}^{k-1}H_q$ such that $\Gamma_y=\Gamma_k$ for all $y\in \widetilde{D}_k$.

  Passing to an infinite subset $L'$ of $L$, we stabilize $\Gamma_k$ into some $\Gamma$, which of course is of density at least $\theta$ and the proof is complete.
\end{proof}

\begin{cor}
  \label{Q_property_lem2}
  Let $(m_q)_{q}$ be a sequence of positive integers, $i,r$ be positive integers, $k_0\in\nn$ with $k_0<i$ and $\theta,\ee$ be reals with $0<\theta<\ee\mik1$. Also let $(H_q)_q$ be a sequence of nonempty finite sets such that
  \begin{enumerate}
    \item[(a)] $|\prod_{q=k_0}^{i-1}H_q|\mik r$ and
    \item[(b)] $|H_k|\meg Q^r_{\theta,\ee}\big((m_q)_{q=i}^k\big)$, for all $k\meg i$.
  \end{enumerate}
  Finally, let $L$ be an infinite subset of $\nn_+$ and $(D_k)_{k\in L}$ be $(k_0,\ee)$-dense in $(H_q)_q$. Then there exist a subset $\Gamma$ of $\prod_{q=k_0}^{i-1}H_q$ of density at least $\theta$ and an infinite subset $L'$ of $L$ such that for every $k\in L'$ with $k>i$ there exists a finite sequence $(I_q^k)_{q=i}^{k-1}$ of finite sets satisfying
  \begin{enumerate}
    \item[(i)] $I_q^k\subseteq H_q$ and $|I_q^k|=m_q$, for all $q=i,\ldots,k-1$ and
    \item[(ii)] $\Gamma^\con\prod_{q=i}^{k-1} I_q^k\subseteq D_k$.
  \end{enumerate}
\end{cor}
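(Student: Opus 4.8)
The plan is to obtain this corollary as a direct combination of Lemma~\ref{fubini_arg} and Corollary~\ref{Q_property_on_cut_cor1}. Lemma~\ref{fubini_arg} is what lets us ``split off'' the initial block $\prod_{q=k_0}^{i-1}H_q$: it produces the dense set $\Gamma$ living on that block together with a thinner, but still positively dense, tail sequence based at $i$; Corollary~\ref{Q_property_on_cut_cor1} then manufactures the required product subsets inside that tail sequence. So the $\Gamma$ and the $L'$ in the conclusion will simply be those handed to us by Lemma~\ref{fubini_arg}.

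Concretely, I would first apply Lemma~\ref{fubini_arg} with the given data $i,r,k_0,\theta,\ee$ and the sequence $(D_k)_{k\in L}$, using hypothesis (a) to meet the requirement $|\prod_{q=k_0}^{i-1}H_q|\mik r$. This yields $\Gamma\subseteq\prod_{q=k_0}^{i-1}H_q$ of density at least $\theta$, an infinite $L'\subseteq L$, and a sequence $(\widetilde{D}_k)_{k\in L'}$ that is $(i,(\ee-\theta)/2^r)$-dense in $(H_q)_q$ and satisfies $\Gamma^\con\widetilde{D}_k\subseteq D_k$ for all $k\in L'$. I would then record the elementary inequality $\tfrac18\big(\tfrac{\ee-\theta}{2^r}\big)^2\mik\tfrac{\ee-\theta}{2^r}$, which holds because $0<\tfrac{\ee-\theta}{2^r}<1$ (as $\ee-\theta<1$ and $r\meg1$); consequently $(\widetilde{D}_k)_{k\in L'}$ is in particular $(i,\tfrac18(\tfrac{\ee-\theta}{2^r})^2)$-dense in $(H_q)_q$. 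Together with hypothesis (b), namely $|H_k|\meg Q^r_{\theta,\ee}\big((m_q)_{q=i}^k\big)$ for all $k\meg i$, this is exactly the input needed to invoke Corollary~\ref{Q_property_on_cut_cor1} with $i$ in place of $k_0$, applied to $(\widetilde{D}_k)_{k\in L'}$; that corollary then gives, for every $k\in L'$ with $k>i$, a finite sequence $(I_q^k)_{q=i}^{k-1}$ with $I_q^k\subseteq H_q$ and $|I_q^k|=m_q$ for $q=i,\ldots,k-1$, and $\prod_{q=i}^{k-1}I_q^k\subseteq\widetilde{D}_k$.

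To finish, I would assemble (ii): since $\prod_{q=i}^{k-1}I_q^k\subseteq\widetilde{D}_k$, the monotonicity of the operation $\Gamma^\con(\cdot)$ (immediate from $A^\con B=\bigcup_{x\in A}x^\con B$) gives $\Gamma^\con\prod_{q=i}^{k-1}I_q^k\subseteq\Gamma^\con\widetilde{D}_k\subseteq D_k$ for every $k\in L'$ with $k>i$, while (i) is automatic. There is no genuine obstacle at this stage, since the two substantive arguments are already packaged in Lemma~\ref{fubini_arg} and Corollary~\ref{Q_property_on_cut_cor1}; the only point needing a moment's care is the bookkeeping of density thresholds, i.e.\ verifying that the density $(\ee-\theta)/2^r$ produced by the former dominates the density $\tfrac18\big(\tfrac{\ee-\theta}{2^r}\big)^2$ required by the latter, and noting that $Q^r_{\theta,\ee}$ was defined precisely with that squared-eighth density so that hypothesis (b) matches the hypothesis of Corollary~\ref{Q_property_on_cut_cor1} verbatim.
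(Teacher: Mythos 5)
Your proof is correct and takes essentially the same route as the paper: apply Lemma~\ref{fubini_arg} to peel off the initial block $\prod_{q=k_0}^{i-1}H_q$, obtaining $\Gamma$, $L'$, and a tail sequence that is $(i,(\ee-\theta)/2^r)$-dense, then feed that tail into Corollary~\ref{Q_property_on_cut_cor1} (with $i$ playing the role of $k_0$) and finish via the containment $\Gamma^\con\widetilde{D}_k\subseteq D_k$. Your explicit verification that $(\ee-\theta)/2^r\meg\tfrac18\big(\tfrac{\ee-\theta}{2^r}\big)^2$, which lets the Fubini lemma's output density meet the hypothesis of Corollary~\ref{Q_property_on_cut_cor1}, is a small but worthwhile detail that the paper's own proof leaves tacit.
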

\begin{proof}
  By Lemma \ref{fubini_arg}, there exist $\Gamma\subset\prod_{q=k_0}^{i-1}H_q$ of density at least $\theta$, $L'$ infinite subset of $L$ and $(D'_k)_{k\in L'}$ being $(i,(\ee-\theta)/2^r)$-dense in $(H_q)_q$ such that
  \begin{equation}
    \label{e13}
    \Gamma^\con D'_k\subseteq D_k
  \end{equation}
  for all $k\in L'$.
  By Corollary \ref{Q_property_on_cut_cor1}, we have that for every $k\in L'$
  there exists a finite sequence $(I_q^k)_{q=i}^{k-1}$ of finite sets satisfying
  \begin{enumerate}
    \item[(i)] $I_q^k\subseteq H_q$ and $|I_q^k|=m_q$ for all $q=k_0,\ldots,k-1$ and
    \item[(ii)] $\prod_{q=k_0}^{k-1} I_q^k\subseteq D_k'$.
  \end{enumerate}
  By (ii) above and \eqref{e13}, we get that $\Gamma^\con\prod_{q=k_0}^{k-1}I_q^k\subseteq D_k$ for all $k\in L'$.
\end{proof}

Finally, in order to define the map $V_\delta$, we need some additional notation. For every $s\in\nn$ we set \[\Delta_s=\{i/2^s: 0<i\mik2^s\;\text{in}\;\nn\},\]
$\Delta=\cup_{s\in\nn}\Delta_s$
and for every real $\delta$ with $0<\delta\mik 1$, we pick $s_\delta\in\nn$ such that $2^{1-s_\delta}\mik\delta$. Let $\delta$ be a real with $0<\delta\mik1$. We define the map $V_\delta:\nn_+^{<\nn}\times\nn_+^{<\nn}\to\nn$ as follows. For every $m_0\in\nn_+$ we set
\[V_\delta((m_0),\emptyset)=\max_{\ee\in\Delta_{s_\delta}}T_\ee\big((m_0)\big)\]
where $\emptyset$ denotes the empty sequence and for every $(m_q)_{q=0}^k,(n_q)_{q=0}^{k-1}\in\nn_+^{<\nn}$ with $k>0$ we set
\[V_\delta\big((m_q)_{q=0}^k,(n_q)_{q=0}^{k-1}\big)=\max\Big\{T_{\min \Delta_{s_\delta}}\big((m_q)_{q=0}^k\big),\max_{\substack{i=1,\ldots,k\\\theta<\ee\;\text{in}\;\Delta_{s_\delta+k}}} Q^{r_i}_{\theta,\ee}\big((m_q)_{q=i}^k\big)\Big\},\]
where $r_i=\prod_{q=0}^{i-1}n_q$. We arbitrarily extend the map $V_\delta$ to the remaining elements of $\nn_+^{<\nn}\times\nn_+^{<\nn}$.

\section{Proof of Theorem \ref{main_result}}
For the rest of this section we fix a real $\delta$ with $0<\delta\mik1$, sequences $(m_q)_{q},(n_q)_{q}$ of positive integers satisfying $n_k\meg V_\delta\big((m_q)_{q=0}^k,(n_q)_{q=0}^{k-1}\big)$ for all $k\in\nn$ and a sequence $(H_q)_q$ of finite sets with $|H_q|=n_q$ for all $q\in\nn$.
\subsection{The inductive hypothesis}
The main part of the proof consists of showing that for every countable ordinal $\xi$ the property $\mathcal{P}(\xi)$ (see Definition \ref{Defn_Property_P} below) holds.
In order to state the definition of the property $\mathcal{P}(\xi)$ we need some additional notation. For $L$ an infinite subset of $\nn$, by $[L]^{<\infty}$ we denote the set of all finite subsets of $L$.
\begin{notation}
  Let $i$ be a positive integer, $k_0\in\nn$ with $k_0<i$, $\Gamma$ be a subset of $\prod_{q=k_0}^{i-1}H_q$, $L$ an infinite subset of $\nn_+$ and $(D_k)_{k\in L}$ such that $D_k\subseteq\prod_{q=k_0}^{k-1}H_q$ for all $k\in L$ with $k>k_0$. We set
  \[\begin{split}
    \mathcal{F}(i,\Gamma,(D_k)_{k\in L})=\{F\in&[L]^{<\infty}:F\neq\emptyset,\;\min F>i\;\text{and there exists a sequence}\\
    &(I_q)_{q=i}^{\max F-1}
  \text{such that}\;I_q\subseteq H_q\;\text{and}\;|I_q|=m_q\;\text{for all}\\
  &q=i,\ldots,\max F-1\;\text{and}\;\Gamma^\con\prod_{q=i}^{k-1}I_q\subseteq D_k\;\text{for all}\;k\in F\}
  \end{split}\]
\end{notation}

Let $i,\Gamma$ and $(D_k)_{k\in L}$ as above. It is easy to see that the family $\mathcal{F}(i,\Gamma,(D_k)_{k\in L})$ satisfies the following property. For every nonempty sets $A\subseteq B$ such that $B\in\mathcal{F}(i,\Gamma,(D_k)_{k\in L})$, we have $A\in\mathcal{F}(i,\Gamma,(D_k)_{k\in L})$. That is $\mathcal{F}(i,\Gamma,(D_k)_{k\in L})\cup\{\emptyset\}$ is hereditary. For a hereditary family $\mathcal{F}$, the rank of $\mathcal{F}$ is defined as follows. If $\mathcal{F}$ is not compact, we define the rank of $\mathcal{F}$ to be equal to $\omega_1$. Otherwise, for every $\sqsubseteq$-maximal element $F\in\mathcal{F}$ we set $r_{\mathcal{F}}(F)=0$ and recursively for every $F\in\mathcal{F}$ we set $r_{\mathcal{F}}(F)=\sup\{r_{\mathcal{F}}(G)+1:G\in\mathcal{F}\;\text{and}\;F\sqsubset G\}$. The rank of $\mathcal{F}$ is defined to be equal to $r_{\mathcal{F}}(\emptyset)$.
The rank of a family $\mathcal{F}(i,\Gamma,(D_k)_{k\in L})$ is defined to be equal to the rank of the hereditary family $\mathcal{F}(i,\Gamma,(D_k)_{k\in L})\cup\{\emptyset\}$. Finally, for $M$ infinite subset of $\nn$ and $\xi$ countable ordinal, we will say that $\mathcal{F}(i,\Gamma,(D_k)_{k\in L})$ is of hereditary rank at least $\xi$ in $M$, if for every $M'$ infinite subset of $M$, the hereditary family $[M']^{<\infty}\cap(\mathcal{F}(i,\Gamma,(D_k)_{k\in L})\cup\{\emptyset\})$ is of rank at least $\xi$.

\begin{defn}
  \label{Defn_Property_P}
  Let $\xi$ be a countable ordinal number. We will say that $\mathcal{P}(\xi)$ holds, if for every $\theta<\ee$ in $\Delta$, every $i'>k_0$ in $\nn$, every infinite subset $L$ of $\nn_+$ and $(D_k)_{k\in L}$ being $(k_0,\ee)$-dense in $(H_q)_q$, there exist  an integer $i$ with $i\meg i'$, an infinite subset $L'$ of $L$ and some $\Gamma\subseteq\prod_{q=k_0}^{i-1} H_q$ of density at least $\theta$ such that the family $\mathcal{F}(i,\Gamma,(D_k)_{k\in L})$
  is of hereditary rank at least $\xi$ in $L'$.
\end{defn}

\begin{cor}
  \label{P(1)_holds}
  The property $\mathcal{P}(1)$ holds.
\end{cor}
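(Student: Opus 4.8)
The plan is to deduce $\mathcal{P}(1)$ directly from Corollary~\ref{Q_property_lem2}. First I would unwind the rank bookkeeping. For a hereditary family containing $\emptyset$, having rank at least $1$ means exactly that it contains a nonempty set (and a noncompact family has rank $\omega_1$ in any case). Hence $\mathcal{F}(i,\Gamma,(D_k)_{k\in L})$ has hereditary rank at least $1$ in an infinite set $L''$ as soon as $\{k\}\in\mathcal{F}(i,\Gamma,(D_k)_{k\in L})$ for every $k\in L''$: indeed, for every infinite $M'\subseteq L''$ and any $k\in M'$ the singleton $\{k\}$ is a nonempty member of the hereditary family $[M']^{<\infty}\cap\big(\mathcal{F}(i,\Gamma,(D_k)_{k\in L})\cup\{\emptyset\}\big)$. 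So, fixing $k_0\in\nn$, reals $\theta<\ee$ in $\Delta$, an integer $i'>k_0$, an infinite $L\subseteq\nn_+$ and $(D_k)_{k\in L}$ being $(k_0,\ee)$-dense in $(H_q)_q$, it is enough to produce an integer $i\meg i'$, a set $\Gamma\subseteq\prod_{q=k_0}^{i-1}H_q$ of density at least $\theta$, and an infinite $L''\subseteq L$ such that each $k\in L''$ admits a sequence $(I_q)_{q=i}^{k-1}$ with $I_q\subseteq H_q$, $|I_q|=m_q$ and $\Gamma^\con\prod_{q=i}^{k-1}I_q\subseteq D_k$.

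Next I would choose the index $i$. Since $\Delta=\bigcup_s\Delta_s$ with the $\Delta_s$ increasing, fix $s\in\nn$ with $\theta,\ee\in\Delta_s$, and put $i=\max\{i',\,s-s_\delta\}$ and $r=r_i=\prod_{q=0}^{i-1}n_q$; then $i$ is a positive integer with $k_0<i$. Condition (a) of Corollary~\ref{Q_property_lem2} is immediate, since $|\prod_{q=k_0}^{i-1}H_q|=\prod_{q=k_0}^{i-1}n_q\mik\prod_{q=0}^{i-1}n_q=r$. For condition (b), note that every $k\meg i$ satisfies $k\meg s-s_\delta$, hence $s\mik s_\delta+k$, so $\theta<\ee$ lie in $\Delta_s\subseteq\Delta_{s_\delta+k}$; as also $1\mik i\mik k$ and $r=r_i$, the value $Q^{r}_{\theta,\ee}\big((m_q)_{q=i}^k\big)$ occurs among the terms of the maximum defining $V_\delta\big((m_q)_{q=0}^k,(n_q)_{q=0}^{k-1}\big)$, whence $|H_k|=n_k\meg V_\delta\big((m_q)_{q=0}^k,(n_q)_{q=0}^{k-1}\big)\meg Q^{r}_{\theta,\ee}\big((m_q)_{q=i}^k\big)$ for all $k\meg i$.

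Finally I would apply Corollary~\ref{Q_property_lem2} with these $i,r,k_0,\theta,\ee$, the sequence $(H_q)_q$, the set $L$ and the sequence $(D_k)_{k\in L}$, obtaining $\Gamma\subseteq\prod_{q=k_0}^{i-1}H_q$ of density at least $\theta$ and an infinite $L'\subseteq L$ such that each $k\in L'$ with $k>i$ carries a sequence $(I_q^k)_{q=i}^{k-1}$ with $I_q^k\subseteq H_q$, $|I_q^k|=m_q$ and $\Gamma^\con\prod_{q=i}^{k-1}I_q^k\subseteq D_k$. Setting $L''=\{k\in L':k>i\}$, this says exactly that $\{k\}\in\mathcal{F}(i,\Gamma,(D_k)_{k\in L})$ for every $k\in L''$ (here $\min\{k\}=k>i$, and $(I_q^k)_{q=i}^{\max\{k\}-1}$ is the required witness), so by the reduction in the first paragraph $\mathcal{F}(i,\Gamma,(D_k)_{k\in L})$ has hereditary rank at least $1$ in $L''$, and $\mathcal{P}(1)$ follows. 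The only genuinely non-formal point is the choice of $i$: because $V_\delta$ bounds $Q^{r_i}_{\theta,\ee}$ only for pairs $\theta<\ee$ in the finite set $\Delta_{s_\delta+k}$, one must push $i$ — and hence every relevant $k\meg i$ — above $s-s_\delta$ so that the given pair $\theta<\ee\in\Delta$ actually appears in the maximum; everything else is unwinding definitions and a direct appeal to Corollary~\ref{Q_property_lem2}.
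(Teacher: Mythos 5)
Your proof is correct and follows essentially the same route as the paper: choose $i\meg i'$ large enough that $\theta,\ee\in\Delta_{s_\delta+k}$ for every $k\meg i$ (so that the $Q^{r_i}_{\theta,\ee}$ term is actually bounded by $V_\delta$), and then apply Corollary~\ref{Q_property_lem2}. The paper states this in one line, while you unwind the rank-$1$ reduction (a hereditary family has rank at least $1$ in $L''$ iff it contains a singleton from every infinite subset) and verify hypotheses (a) and (b) of Corollary~\ref{Q_property_lem2} explicitly; this is exactly the content the paper leaves implicit.
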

\begin{proof}
  Let $\theta<\ee$ in $\Delta$, $i'>k_0$ in $\nn$, an infinite subset $L$ of $\nn_+$ and $(D_k)_{k\in L}$ being $(k_0,\ee)$-dense in $(H_q)_q$. We pick $i\meg i'$ such that $\theta,\ee\in\Delta_{s_\delta+i}$. The result is immediate by Corollary \ref{Q_property_lem2}.
\end{proof}

\begin{notation}
  Let $0<k_0< c\mik k$ be integers and $x\in\prod_{q=k_0}^{k-1}H_q$. By $x\upharpoonright c$, we denote the initial segment of $x$ in $\prod_{q=k_0}^{c-1}H_q$.
\end{notation}

\begin{lem}
  \label{induction_main_lemma}
  Assume that for some sequence of countable ordinals $(\xi_n)_{n}$ we have that $\mathcal{P}(\xi_n)$ holds for all $n\in\nn$. Let $\eta$ in $\Delta$, $k_0< i'$ in $\nn$, $L$ be an infinite subset of $\nn_+$ and $(\widetilde{D}_k)_{k\in L}$ being $(k_0,\eta)$-dense in $(H_q)_q$. Then there exist an infinite subset $L'=\{l_0<l_1<\ldots\}$ of $L$ with $i'\mik l_0$, an infinite subset $I=\{i_0<i_1<\ldots\}$ of $\nn$ and $(\Gamma_{i})_{i\in I}$ being $(k_0,\eta^2/8)$-dense in $(H_q)_q$ such that and for every $n\in\nn$ we have
  \begin{enumerate}
    \item[(i)] $l_n<i_n<l_{n+1}$,
    \item[(ii)] for every $x\in\Gamma_{i_n}$ we have $x\upharpoonright l_n\in \widetilde{D}_{l_n}$
    \item[(iii)] the family $\mathcal{F}(i_n,\Gamma_{i_n},(\widetilde{D}_k)_{k\in L})$
  is of hereditary rank at least $\xi_n$ in $L'$.
  \end{enumerate}
\end{lem}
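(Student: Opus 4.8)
The plan is to build the sequences $(l_n)$, $(i_n)$, $(\Gamma_{i_n})$ by recursion on $n$, maintaining a decreasing chain $L^{(0)}\supseteq L^{(1)}\supseteq\cdots$ of infinite subsets of $L$ to control the tails, and then to read off $L'$ and $I$ at the end. One harmless preliminary reduction: since $\mathcal{P}(1)$ holds (Corollary~\ref{P(1)_holds}), I may replace each $\xi_n$ by $\max\{\xi_n,1\}$, as hereditary rank at least $\max\{\xi_n,1\}$ entails hereditary rank at least $\xi_n$; so henceforth I assume $\xi_n\meg1$. I would also record the arithmetic facts $\eta^2/4,\eta^2/8\in\Delta$ (if $\eta=j/2^s$ these equal $j^2/2^{2s+2}$ and $j^2/2^{2s+3}$), which is what will let $\mathcal{P}$ be invoked with $\ee=\eta^2/4$ and $\theta=\eta^2/8$.

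The engine of the proof is the following \textbf{sub-claim}: if $K\subseteq L$ is infinite with $\min K>k_0$, then there are $l^{*}\in K$ and an infinite $M\subseteq\{k\in K:k>l^{*}\}$ such that, for every $k\in M$, the set $\{y\in\widetilde{D}_k:y\upharpoonright l^{*}\in\widetilde{D}_{l^{*}}\}$ has density at least $\eta^2/4$ in $\prod_{q=k_0}^{k-1}H_q$. To prove it, for $l<k$ in $K$ I declare $l\triangleleft k$ when the displayed set (with $l$ in place of $l^{*}$) has density at least $\eta^2/4$, and I note that in the uniform probability space on any $\prod_{q=k_0}^{m-1}H_q$ with $m\meg k$ the intersection of the pullbacks of $\widetilde{D}_l$ and $\widetilde{D}_k$ has measure equal to the density of that very set; so it suffices to find $l^{*}\in K$ with $l^{*}\triangleleft k$ for infinitely many $k\in K$. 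If no such $l^{*}$ existed, every set $\{k\in K:k>l,\ l\triangleleft k\}$ would be finite, and I could recursively choose $l^{(0)}<\cdots<l^{(N-1)}$ in $K$ with $N=\Sigma(\eta/2,\eta,2)$ so that $l^{(i)}\not\triangleleft l^{(j)}$ whenever $i<j$ (at each step avoiding a finite union of finite sets, which is possible since $K$ is infinite). But Lemma~\ref{correlation}, applied in the uniform probability space on $\prod_{q=k_0}^{l^{(N-1)}-1}H_q$ to the $N$ events which are the pullbacks of $\widetilde{D}_{l^{(0)}},\ldots,\widetilde{D}_{l^{(N-1)}}$ (each of measure at least $\eta$, as $(\widetilde{D}_k)_{k\in L}$ is $(k_0,\eta)$-dense), yields $i<j$ with the intersection of the two corresponding events having measure at least $(\eta/2)^2=\eta^2/4$, that is $l^{(i)}\triangleleft l^{(j)}$ --- a contradiction. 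This is the step I expect to be the main obstacle: one must locate $l^{*}$ by a correlation argument rather than simply take $l^{*}=\min K$, because the naive idea of intersecting the sequence with $\widetilde{D}_{\min K}$ founders on the fact that two dense sets can be disjoint.

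Granting the sub-claim, the recursive step is routine. Set $L^{(0)}=\{k\in L:k\meg i'\}$, and suppose $L^{(n)}\subseteq L$ is infinite with $\min L^{(n)}>k_0$ and, for $n\meg1$, $\min L^{(n)}>i_{n-1}$. Apply the sub-claim with $K=L^{(n)}$ to obtain $l_n\in L^{(n)}$ and an infinite $M_n\subseteq\{k\in L^{(n)}:k>l_n\}$; then, putting $\widehat{D}_k=\{y\in\widetilde{D}_k:y\upharpoonright l_n\in\widetilde{D}_{l_n}\}$ for $k\in M_n$, the sequence $(\widehat{D}_k)_{k\in M_n}$ is $(k_0,\eta^2/4)$-dense. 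Now apply $\mathcal{P}(\xi_n)$ to this sequence, with cut $k_0$, parameters $\theta=\eta^2/8<\eta^2/4=\ee$, and starting index $l_n+1$ (which exceeds $k_0$); this produces an integer $i_n\meg l_n+1$, an infinite $N_n\subseteq M_n$, and $\Gamma_{i_n}\subseteq\prod_{q=k_0}^{i_n-1}H_q$ of density at least $\eta^2/8$ with $\mathcal{F}(i_n,\Gamma_{i_n},(\widehat{D}_k)_{k\in M_n})$ of hereditary rank at least $\xi_n$ in $N_n$. Put $L^{(n+1)}=\{k\in N_n:k>i_n\}$, which is infinite with $\min L^{(n+1)}>i_n>k_0$, and continue.

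Finally put $L'=\{l_n:n\in\nn\}$ and $I=\{i_n:n\in\nn\}$. From $i_n\meg l_n+1$ and $l_{n+1}\in L^{(n+1)}$ one reads off $i'\mik l_0<i_0<l_1<i_1<\cdots$, which gives (i) together with the bound $i'\mik l_0$, and $(\Gamma_i)_{i\in I}$ is $(k_0,\eta^2/8)$-dense by construction. For (ii): since $\xi_n\meg1$, the family $\mathcal{F}(i_n,\Gamma_{i_n},(\widehat{D}_k)_{k\in M_n})$ is nonempty, so for some $k\in M_n$ with $k>i_n$ there are $I_q\subseteq H_q$ with $|I_q|=m_q$ for $q=i_n,\ldots,k-1$ and $\Gamma_{i_n}^{\con}\prod_{q=i_n}^{k-1}I_q\subseteq\widehat{D}_k$; choosing any $x\in\Gamma_{i_n}$ and any element of $\prod_{q=i_n}^{k-1}I_q$ and using $l_n<i_n$ forces $x\upharpoonright l_n\in\widetilde{D}_{l_n}$, which is (ii) as $x$ was arbitrary. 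For (iii): from $\widehat{D}_k\subseteq\widetilde{D}_k$ and $M_n\subseteq L$ we get $\mathcal{F}(i_n,\Gamma_{i_n},(\widehat{D}_k)_{k\in M_n})\subseteq\mathcal{F}(i_n,\Gamma_{i_n},(\widetilde{D}_k)_{k\in L})$, and hereditary rank is monotone under such inclusions (an easy induction on rank), so the latter family has hereditary rank at least $\xi_n$ in $N_n$; since all its members have minimum $>i_n$ and every infinite subset of $L'$ has a cofinite part contained in $N_n$ (because $l_m\in L^{(m)}\subseteq N_n$ for $m>n$), the same rank bound holds in $L'$, which is exactly (iii).
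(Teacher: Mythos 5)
Your proof is correct, and the overall inductive scheme coincides with the paper's: build the sequences $l_n,i_n,\Gamma_{i_n}$ and a decreasing chain of infinite tails, at each step correlating the new targets with $\widetilde{D}_{l_n}$ to form $\widehat{D}_k=\{y\in\widetilde{D}_k:y\upharpoonright l_n\in\widetilde{D}_{l_n}\}$, invoking $\mathcal{P}(\xi_n)$ on this $(k_0,\eta^2/4)$-dense family, and finally reading off (ii) from non-emptiness and (iii) from the inclusion $\mathcal{F}(i_n,\Gamma_{i_n},(\widehat{D}_k)_{k\in M_n})\subseteq\mathcal{F}(i_n,\Gamma_{i_n},(\widetilde{D}_k)_{k\in L})$ together with monotonicity of hereditary rank. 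The one genuine organizational difference is in how the $\eta^2/4$-density of the correlated family is secured: the paper applies Ramsey's theorem once at the outset (colouring pairs $k<k'$ by whether the intersection density is at least $\eta^2/4$, with Lemma~\ref{correlation} ruling out a bad-coloured infinite set), obtaining a single $L_0$ in which every pair is good, and then simply takes $l_n=\min L_n$; you instead prove a local sub-claim, invoked afresh at each stage of the recursion, that produces one good pivot $l^*$ with infinitely many good successors, avoiding Ramsey's theorem by a greedy antichain construction contradicted by the same application of Lemma~\ref{correlation} with exponent~$2$. The two routes buy essentially the same thing; yours trades one global Ramsey call for a repeated local correlation argument and is self-contained, while the paper's is slightly more economical in bookkeeping since the pivot is always $\min L_n$. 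Your explicit reduction to $\xi_n\meg1$ (so that the family is non-empty, which is what drives (ii)) is a correct and useful precision that the paper leaves implicit.
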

\begin{proof}
  Using $\mathcal{P}(\xi_n)$ we inductively construct a decreasing sequence $(L_n)_{n}$ of infinite subsets of $L$ and a $(k_0,\eta^2/8)$-dense sequence $(\Gamma_{i_n})_{n}$ in $(H_q)_q$ such that $i'\mik i_0$ and for every $n\in\nn$ we have that
  \begin{enumerate}
    \item[(a)] $\min L_n< i_n<\min L_{n+1}$,
    \item[(b)] for every $x\in\Gamma_{i_n}$ we have $x\upharpoonright\min L_n\in \widetilde{D}_{\min L_n}$ and
    \item[(c)] the family $\mathcal{F}(i_n,\Gamma_{i_n},(\widetilde{D}_k)_{k\in L})$
  is of hereditary rank at least $\xi_n$ in $L_{n+1}$.
  \end{enumerate}
  First by Ramsey's Theorem and Lemma \ref{correlation} we may pass to an infinite subset $L_0$ of $L$ such that $\min L_0> i'$ and for every $k<k'$ in $L_0$ we have that the set $\{x\in \widetilde{D}_{k'}:x\upharpoonright k\in\widetilde{D}_k\}$ is of density at least $\eta^2/4$ in $\prod_{q=k_0}^{k-1}H_q$.  Suppose that for some $n\in\nn$ we have constructed $(L_p)_{p=0}^n$ and if $n>0$, $(i_p)_{p=0}^{n-1}$ and $(\Gamma_{i_p})_{p=0}^{n-1}$. Treating $i'$ as $i_{-1}$ in case $n=0$, the inductive step of the construction has as follows. For every $k\in L_n$ we set
  \begin{equation}
    \label{e10}
    \widetilde{D}_k^n=\{x\in \widetilde{D}_k:x\upharpoonright\min L_n\in \widetilde{D}_{\min L_n}\}.
  \end{equation}
  Since $L_n$ is subset of $L_0$, by the choice of $L_0$, we have that $(\widetilde{D}^n_k)_{k\in L'_n}$ is $(k_0,\eta^2/4)$-dense in $(H_q)_q$.
  Applying $\widetilde{\mathcal{P}}(\xi_n)$, we obtain some $i_n>\min L_n$, an infinite subset $L_{n+1}$ of $L_n$ and $\Gamma_{i_n}\subseteq\prod_{q=k_0}^{i_n-1}H_q$ of density at least $\eta^2/8$ such that the family $\mathcal{F}(i_n,\Gamma_{i_n},(\widetilde{D}^n_k)_{k\in L})$ is of hereditary rank at least $\xi_n$ in $L_{n+1}$. Passing to a final segment of $L_{n+1}$ we may assume that $\min L_{n+1}>i_n$ and the inductive step of the construction is complete. Indeed, property (a) is immediate, as well as property (c) by the observation that $\mathcal{F}(i_n,\Gamma_{i_n},(\widetilde{D}^n_k)_{k\in L})\subseteq \mathcal{F}(i_n,\Gamma_{i_n},(\widetilde{D}_k)_{k\in L})$. Concerning (b), notice that, since $\mathcal{F}(i_n,\Gamma_{i_n},(\widetilde{D}^n_k)_{k\in L})$ in non empty there exist $k\in L_{n+1}$ and $y$ such that $x^\con y\in \widetilde{D}_k^n$ for all $x\in\Gamma_{i_n}$. Since $\min L_n<i_n$, by \eqref{e10}, we have that $x\upharpoonright\min L_n=x^\con y\upharpoonright\min L_n\in\widetilde{D}_{\min L_n}$.

  Setting $L'=\{\min L_n:n\in\nn\}$,  it is easy to check that the proof is complete.
\end{proof}

\begin{lem}
  \label{induction_consequence_lemma}
  Assume that for some (not necessarily strictly) increasing sequence of countable ordinals $(\xi_n)_{n}$ we have that $\mathcal{P}(\xi_n)$ holds for all $n\in\nn$. Then $\mathcal{P}(\xi)$ holds, where $\xi=\sup\{\xi_n+1:n\in\nn\}$.
\end{lem}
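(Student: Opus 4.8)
\emph{Plan of proof.} The strategy is to feed Lemma~\ref{induction_main_lemma} -- which, granting that $\mathcal{P}(\xi_n)$ holds for all $n$, produces families of ranks $\xi_n$ stacked above a common threshold -- the output of a preliminary Fubini reduction (Lemma~\ref{fubini_arg}) that has already extracted a base of density at least $\theta$. So fix the data: $k_0\in\nn$, reals $\theta<\ee$ in $\Delta$, $i'>k_0$ in $\nn$, an infinite $L\subseteq\nn_+$, and a $(k_0,\ee)$-dense sequence $(D_k)_{k\in L}$. First choose $i\meg i'$ with $\theta,\ee\in\Delta_{s_\delta+i}$, set $r=\prod_{q=0}^{i-1}n_q$ (so $|\prod_{q=k_0}^{i-1}H_q|\mik r$) and $\eta=(\ee-\theta)/2^{r}\in\Delta$, and apply Lemma~\ref{fubini_arg} to get $\Gamma\subseteq\prod_{q=k_0}^{i-1}H_q$ of density at least $\theta$, an infinite $L_1\subseteq L$, and an $(i,\eta)$-dense sequence $(\widetilde D_k)_{k\in L_1}$ with $\Gamma^\con\widetilde D_k\subseteq D_k$ for all $k\in L_1$. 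Then apply Lemma~\ref{induction_main_lemma} to $(\widetilde D_k)_{k\in L_1}$, with $i$ in the role of $k_0$ and $i+1$ in the role of $i'$, to obtain an infinite $L'=\{l_0<l_1<\cdots\}\subseteq L_1$ with $l_0>i$, an infinite $I=\{i_0<i_1<\cdots\}\subseteq\nn$, and an $(i,\eta^2/8)$-dense sequence $(\Gamma_{i_n})_n$ such that for every $n$: $l_n<i_n<l_{n+1}$; $x\upharpoonright l_n\in\widetilde D_{l_n}$ for all $x\in\Gamma_{i_n}$; and $\mathcal{F}(i_n,\Gamma_{i_n},(\widetilde D_k)_{k\in L_1})$ has hereditary rank at least $\xi_n$ in $L'$. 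I claim that $i$, $\Gamma$ and $L'$ witness $\mathcal{P}(\xi)$ for this data.

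Each $\Gamma_{i_n}$ is a subset of $\prod_{q=i}^{i_n-1}H_q$ of density at least $\eta^2/8$, while $|H_q|=n_q\meg T_{\eta^2/8}((m_p)_{p=i}^{q})=Q^{r}_{\theta,\ee}((m_p)_{p=i}^{q})$ for every $q\meg i$; the latter is one of the inequalities encoded in $n_q\meg V_\delta((m_p)_{p=0}^q,(n_p)_{p=0}^{q-1})$, using $\theta,\ee\in\Delta_{s_\delta+i}\subseteq\Delta_{s_\delta+q}$. Hence by Lemma~\ref{T1_property_prequel} there is, for each $n$, a sequence $(\bar I^n_q)_{q=i}^{i_n-1}$ with $\bar I^n_q\subseteq H_q$, $|\bar I^n_q|=m_q$ and $\prod_{q=i}^{i_n-1}\bar I^n_q\subseteq\Gamma_{i_n}$. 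Next I would check that for every $n$ and every $F'\in\mathcal{F}(i_n,\Gamma_{i_n},(\widetilde D_k)_{k\in L_1})$ one has $\{l_n\}\cup F'\in\mathcal{F}(i,\Gamma,(D_k)_{k\in L})$, and likewise $\{l_n\}\in\mathcal{F}(i,\Gamma,(D_k)_{k\in L})$: fixing $(J_q)_{q=i_n}^{\max F'-1}$ witnessing $F'\in\mathcal{F}(i_n,\Gamma_{i_n},(\widetilde D_k))$ and setting $I_q=\bar I^n_q$ for $i\mik q<i_n$ and $I_q=J_q$ for $i_n\mik q<\max F'$, one gets for $k\in F'$ that $\Gamma^\con\prod_{q=i}^{k-1}I_q\subseteq\Gamma^\con\Gamma_{i_n}^\con\prod_{q=i_n}^{k-1}J_q\subseteq\Gamma^\con\widetilde D_k\subseteq D_k$, while for $k=l_n$ every member of $\prod_{q=i}^{l_n-1}I_q=\prod_{q=i}^{l_n-1}\bar I^n_q$ is the restriction to coordinates $[i,l_n)$ of some member of $\prod_{q=i}^{i_n-1}\bar I^n_q\subseteq\Gamma_{i_n}$, hence lies in $\widetilde D_{l_n}$, so $\Gamma^\con\prod_{q=i}^{l_n-1}I_q\subseteq\Gamma^\con\widetilde D_{l_n}\subseteq D_{l_n}$ (the case $F'=\emptyset$, with no $(J_q)$ part, handles $\{l_n\}$). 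Consequently $F'\mapsto\{l_n\}\cup F'$ maps $\mathcal{F}(i_n,\Gamma_{i_n},(\widetilde D_k)_{k\in L_1})\cup\{\emptyset\}$ into $\mathcal{F}(i,\Gamma,(D_k)_{k\in L})\cup\{\emptyset\}$ and preserves $\sqsubset$, since $l_n<i_n<\min F'$.

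Then comes the rank computation. Let $M'$ be an arbitrary infinite subset of $L'$ and write $\mathcal{G}'=[M']^{<\infty}\cap(\mathcal{F}(i,\Gamma,(D_k)_{k\in L})\cup\{\emptyset\})$. For each $n$ with $l_n\in M'$, the map above restricts to a rank-nondecreasing map of $[M']^{<\infty}\cap(\mathcal{F}(i_n,\Gamma_{i_n},(\widetilde D_k)_{k\in L_1})\cup\{\emptyset\})$ into $\mathcal{G}'$ sending $\emptyset$ to $\{l_n\}$; since $M'\subseteq L'$ and $\mathcal{F}(i_n,\Gamma_{i_n},(\widetilde D_k)_{k\in L_1})$ has rank at least $\xi_n$ in $L'$, the domain family has rank at least $\xi_n$, hence $r_{\mathcal{G}'}(\{l_n\})\meg\xi_n$ and so $r_{\mathcal{G}'}(\emptyset)\meg\xi_n+1$. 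Because $M'$ is infinite and $L'=\{l_0<l_1<\cdots\}$, the set $\{n:l_n\in M'\}$ is cofinal in $\nn$; as $(\xi_n)_n$ is increasing this forces $r_{\mathcal{G}'}(\emptyset)\meg\sup\{\xi_n+1:n\in\nn\}=\xi$. Since $M'$ was arbitrary, $\mathcal{F}(i,\Gamma,(D_k)_{k\in L})$ has hereditary rank at least $\xi$ in $L'$, which is exactly $\mathcal{P}(\xi)$.

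The step I expect to require the most care is reconciling two competing demands. On one hand the final base $\Gamma$ must have density at least $\theta$, which is why the Fubini reduction of Lemma~\ref{fubini_arg} is done first: it places the rank-$\xi_n$ pieces $\Gamma_{i_n}$ entirely on coordinates above those of $\Gamma$, so one can ``enter'' $\Gamma_{i_n}$ through a product of $m_q$-sets furnished by Lemma~\ref{T1_property_prequel} with no correlation issue against $\Gamma$ (working directly with $(D_k)$ instead would force the base to overlap the $\Gamma_{i_n}$ and bring back exactly such an issue). On the other hand the rank estimate has to survive restriction to an arbitrary infinite $M'\subseteq L'$, which is what forces the threshold points $l_n$ -- automatically cofinal in any such $M'$ -- to serve as the entry points, and makes the monotonicity of $(\xi_n)_n$ essential for reaching $\sup\{\xi_n+1\}$.
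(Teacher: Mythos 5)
Your proof is correct and follows essentially the same route as the paper: choose $i\geq i'$ so that $\theta,\ee\in\Delta_{s_\delta+i}$, run the Fubini reduction (Lemma~\ref{fubini_arg}) to extract the base $\Gamma$ of density $\geq\theta$, feed the shifted sequence into Lemma~\ref{induction_main_lemma}, enter each $\Gamma_{i_n}$ through a product of $m_q$-sets (your direct appeal to Lemma~\ref{T1_property_prequel} is formally the same as the paper's appeal to Corollary~\ref{Q_property_on_cut_cor1}, by the definition of $Q^r_{\theta,\ee}$), and establish the inclusion $\{\{l_n\}\cup F':n\in\nn,\,F'\in\mathcal{F}(i_n,\Gamma_{i_n},(\widetilde D_k))\}\subseteq\mathcal{F}(i,\Gamma,(D_k))$. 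The only difference is that you spell out the final rank computation (the $\sqsubset$-preserving map $F'\mapsto\{l_n\}\cup F'$ pushes the rank past $\xi_n$ at $\{l_n\}$, and cofinality of $\{n:l_n\in M'\}$ together with monotonicity of $(\xi_n)$ gives $\sup\{\xi_n+1\}$), a step the paper dismisses as ``an immediate consequence of the claim.''
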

\begin{proof}
  Let $\theta<\ee$ in $\Delta$, $i'>k_0$ in $\nn$, an infinite subset $L$ of $\nn_+$ and $(D_k)_{k\in L}$ being $(k_0,\ee)$-dense in $(H_q)_q$. We pick $i\meg i'$ such that $\theta,\ee\in\Delta_{s_\delta+i}$. By Lemma \ref{fubini_arg} there exist $\Gamma\subset\prod_{q=k_0}^{i-1}H_q$ of density at least $\theta$, $L''$ infinite subset of $L$ and $(\widetilde{D}_k)_{k\in L''}$ being $(i,(\ee-\theta)/2^r)$-dense in $(H_q)_q$ such that
  \begin{equation}
    \label{e12}
    \Gamma^\con \widetilde{D}_k\subseteq D_k
  \end{equation}
  for all $k\in L''$. By Lemma \ref{induction_main_lemma} there exist an infinite subset $L'=\{l_0<l_1<\ldots\}$ of $L''$ with $i\mik l_0$, an infinite subset $I=\{i_0<i_1<\ldots\}$ of $\nn$ and $(\Gamma_{i})_{i\in I}$ being $(i,\frac{1}{8}(\frac{\ee-\theta}{2^r})^2)$-dense in $(H_q)_q$ such that and for every $n\in\nn$ we have that
  \begin{enumerate}
    \item[(i)] $l_n<i_n<l_{n+1}$,
    \item[(ii)] for every $x\in\Gamma_{i_n}$ we have $x\upharpoonright l_n\in \widetilde{D}_{l_n}$
    \item[(iii)] the family $\mathcal{F}(i_n,\Gamma_{i_n},(\widetilde{D}_k)_{k\in L})$
  is of hereditary rank $\xi_n$ in $L'$.
  \end{enumerate}
  By the choice of $(n_q)_{q}$, the definition of the map $V_\delta$ and Corollary \ref{Q_property_on_cut_cor1}, we have that for every $n\in \nn$ there exists a finite sequence $(I_q^n)_{q=i}^{i_n-1}$ of finite sets satisfying
  \begin{enumerate}
    \item[(a)] $I_q^n\subseteq H_q$ and $|I_q^n|=m_q$ for all $q=i,\ldots,i_n-1$ and
    \item[(b)] $\prod_{q=i}^{i_n-1} I_q^n\subseteq \Gamma_{i_n}$.
  \end{enumerate}
  By (ii) and (b) we have that
  \begin{equation}
    \label{e11}
    \prod_{q=i}^{l_n-1}I_q^n\subseteq\widetilde{D}_{l_n}
  \end{equation}
   for all $n\in\nn$. \\
   \textbf{Claim:} $\mathcal{F}(i,\Gamma,(D_k)_{k\in L})\supseteq\{\{l_n\}\cup F:n\in\nn\;\text{and}\;F\in\mathcal{F}(i_n,\Gamma_{i_n},(\widetilde{D}_k)_{k\in L})\}$.
   \begin{proof}
     [Proof of Claim.]
     Let $n\in\nn$ and $F\in\mathcal{F}(i_n,\Gamma_{i_n},(\widetilde{D}_k)_{k\in L})$. Let $(I_q)_{q=i_n}^{\max F-1}$ the finite sequence witnessing that $F\in\mathcal{F}(i_n,\Gamma_{i_n},(\widetilde{D}_k)_{k\in L})$. Let $(J_q)_{q=i}^{\max F-1}$ defined by $J_q=I^n_q$ for all $q=i,\ldots,i_n-1$ and $J_q=I_q$ for all $q=i_n,\ldots, \max F-1$. By \eqref{e11}, (b) and the fact that $F\in\mathcal{F}(i_n,\Gamma_{i_n},(\widetilde{D}_k)_{k\in L})$ we have that
     \[\prod_{q=i}^{l_n-1}J_q\subset\widetilde{D}_{l_n}\;\text{and}\;\prod_{q=i}^{k-1}J_q\subseteq\widetilde{D}_k\;\text{for all}\;k\in F\]
     By \eqref{e12} we get that $\{l_n\}\cup F\in \mathcal{F}(i,\Gamma,(D_k)_{k\in L})$.
   \end{proof}
   Lemma \ref{induction_consequence_lemma} is an immediate consequence of the claim above.
\end{proof}
By Corollary \ref{P(1)_holds} and Lemma \ref{induction_consequence_lemma} we have the following.

\begin{cor}
  \label{P(every_countable_ordinal)_holds}
  For every countable ordinal $\xi$, the property $\mathcal{P}(\xi)$ holds.
\end{cor}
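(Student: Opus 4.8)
The plan is to argue by transfinite induction on the countable ordinal $\xi$, using Corollary \ref{P(1)_holds} as the base case and Lemma \ref{induction_consequence_lemma} as the engine of the inductive step. Before starting, I would record an elementary monotonicity property of $\mathcal{P}$: if $\zeta\mik\xi$ are countable ordinals and $\mathcal{P}(\xi)$ holds, then $\mathcal{P}(\zeta)$ holds as well, because a hereditary family of rank at least $\xi$ is a fortiori of rank at least $\zeta$, so the very same witnesses $i$, $L'$ and $\Gamma$ provided by $\mathcal{P}(\xi)$ also witness $\mathcal{P}(\zeta)$. In particular $\mathcal{P}(0)$ holds trivially, since for any choice of $i$, $L'$ and $\Gamma$ the hereditary family $\mathcal{F}(i,\Gamma,(D_k)_{k\in L})\cup\{\emptyset\}$ has rank at least $0$, and $\mathcal{P}(1)$ is exactly Corollary \ref{P(1)_holds}.

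For the inductive step, fix a countable ordinal $\xi$ and assume that $\mathcal{P}(\zeta)$ holds for every countable $\zeta<\xi$; I want to deduce $\mathcal{P}(\xi)$. The cases $\xi=0$ and $\xi=1$ were just handled. If $\xi=\zeta+1$ is a successor with $\zeta\meg1$, I would apply Lemma \ref{induction_consequence_lemma} to the constant sequence $\xi_n=\zeta$, $n\in\nn$: this is a (non-strictly) increasing sequence of countable ordinals, each $\mathcal{P}(\xi_n)=\mathcal{P}(\zeta)$ holds by the inductive hypothesis, and $\sup\{\xi_n+1:n\in\nn\}=\zeta+1=\xi$, so the lemma yields $\mathcal{P}(\xi)$. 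If $\xi$ is a limit ordinal, I would fix a strictly increasing sequence $(\xi_n)_n$ of countable ordinals with $\sup_n\xi_n=\xi$; then each $\xi_n<\xi$, so $\mathcal{P}(\xi_n)$ holds by the inductive hypothesis, and since $\xi$ is a limit we have $\xi_n+1<\xi$ for every $n$ and $\sup\{\xi_n+1:n\in\nn\}=\sup_n\xi_n=\xi$. Lemma \ref{induction_consequence_lemma} then gives $\mathcal{P}(\xi)$, completing the induction.

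At this stage there is essentially no obstacle to overcome, since all of the combinatorial content has been absorbed into Lemmas \ref{induction_main_lemma} and \ref{induction_consequence_lemma}. The only point requiring a moment's care is the ordinal arithmetic at the limit stage: one must choose the approximating sequence $(\xi_n)_n$ converging to $\xi$ from below so that $\sup\{\xi_n+1:n\in\nn\}$ equals $\xi$ rather than $\xi+1$, and this is precisely where the assumption that $\xi$ is a limit ordinal is used.
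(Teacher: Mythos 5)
Your argument is correct and is exactly the transfinite induction the paper has in mind when it writes that the corollary follows from Corollary \ref{P(1)_holds} and Lemma \ref{induction_consequence_lemma}; the paper simply leaves the routine ordinal bookkeeping (constant sequence for successors, strictly increasing approximating sequence for limits) implicit, while you spell it out. The monotonicity remark is harmless but not needed, since the induction proceeds entirely from below.
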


We are ready to complete the proof for Theorem \ref{main_result}.

\begin{proof}[Proof of Theorem \ref{main_result}]
  Let $L$ be an infinite subset of $\nn_+$ and $(D_k)_{k\in L}$ be $\delta$-dense in $(H_q)_q$.
 By Corollary \ref{P(every_countable_ordinal)_holds} we have that for every countable ordinal $\xi$ there exist a positive integer $i_\xi$ and a subset $\Gamma_{\xi}$ of $\prod_{q=0}^{i_\xi-1}H_q$ of density at least $\delta/2$ such that the family $\mathcal{F}(i_\xi,\Gamma_\xi,(D_k)_{k\in L})$ is of rank at least $\xi$.
 We pick $\mathcal{A}$ uncountable subset of $\omega_1$ such that $i_\xi$ and $\Gamma_\xi$ are stabilized into some $i$ and $\Gamma$ respectively, for all $\xi\in\mathcal{A}$. Let $\mathcal{F}=\mathcal{F}(i,\Gamma,(D_k)_{k\in L})$. Clearly the rank of $\mathcal{F}$ is greater that or equal to $\xi$, for all $\xi\in\mathcal{A}$. Since $\mathcal{A}$ is uncountable we get that the rank of $\mathcal{F}$ is $\omega_1$. The latter and the hereditariness of the family $\mathcal{F}$ yields the existence of some infinite subset $M=\{m_0<m_1<\ldots\}$ of $L$ such that for every $F$ finite subset of $M$ we have that $F\in \mathcal{F}$. Clearly, we may assume that $m_0>i$. Let us denote by $F_n$ the initial subset of $M$ of length $n+1$, i.e. $F_n=\{m_0,\ldots, m_n\}$, for every $n$ nonnegative integer. Thus for every positive integer $n$, since $F_n\in \mathcal{F}$, we have that there exist $(I_q^n)_{q=i}^{m_n-1}$ such that
  \begin{enumerate}
    \item[(1$'$)] $I_q^n\subseteq H_q$ and $|I_q^n|=m_q$ for all $q=i,\ldots,m_n-1$ and
    \item[(2$'$)] $\Gamma^\con\prod_{q=i}^{m_k-1}I^n_q\subset D_{m_k}$ for all $k=0,\ldots,n$.
  \end{enumerate}
  We pass to a subsequence $(F_{k_n})_{n}$ of $(F_n)_{n}$ such that $\big((I_q^{k_n})_{q=i}^{m_{k_n}-1}\big)_{n}$ pointwise converges to some $(I_q)_{q=i}^\infty$. It is immediate that the following are satisfied.
  \begin{enumerate}
    \item[(1)] For every integer $q\meg i$, $I_q\subset H_q$ and $|I_q|=m_q$.
    \item[(2)] For every $n\in\nn$, $\Gamma^\con\prod_{q=i}^{m_{k_n}-1}I_q\subset D_{m_{k_n}}$.
  \end{enumerate}
  Notice that $|H_q|\meg T_{\delta/2}\big((m_p)_{p=0}^q\big)$ for all $q=0,\ldots,i-1$. Since $\Gamma$ is subset of $\prod_{q=0}^{i-1}H_q$ of density at least $\delta/2$, by Corollary \ref{T1_property}, there exists $(I_q)_{q=0}^{i-1}$ such that $I_q\subset H_q$ and $|I_q|=m_q$ for all $q=0,\ldots,i-1$ and $\prod_{q=0}^{i-1}I_q\subset \Gamma$.
  It is easy to check that the sequence $(I_q)_{q}$ is the desired one.
\end{proof}

\section{Some calculus on the map $V_\delta$}
In this section we provide bounds for the map $V_\delta$ in terms of the Ackermann functions. In particular, we provide bounds for the map $f:\nn^{<\nn}\to\nn$ inductively defined as follows. We set $f\big((m_0)\big)=V_\delta\big((m_0),\emptyset\big)$ for all $m_0\in\nn_+$ and $f\big((m_q)_{q=0}^k\big)=V_\delta\big((m_q)_{q=0}^{k},\big(f\big((m_p)_{p=0}^q\big)\big)_{q=0}^{k-1}\big)$ for all $(m_q)_{q=0}^k\in\nn^{<\nn}$ with $k>0$.
 Let us recall that the Ackermann hierarchy of functions from $\nn$ into $\nn$ is defined as follows.
\[\begin{split}
  &A_0(0)=1,\;A_0(1)=2,\;\text{and}\;A_0(x)=2+x\;\text{for all}\;x\meg2\\
  &A_{n+1}(x)=A^{(x)}_n(1)\;\text{for all}\;x\in\nn,
\end{split}\]
where $A^{(x)}_n$ denotes the $x$th iteration of $A_n$. (The 0th iteration of any function is considered to be the identity.) In particular, according to the above definition we have that $A_1(x)=2x$ for all $x\meg1$ and $A_2(x)=2^x$ for all $x\in\nn$. All the functions $A_n$ are primitive recursive and more precisely $A_n$ belongs to the class $\mathcal{E}^{n+1}$ of the Grzegorczyk hierarchy for all $n\in\nn$.

For the rest of this section we fix a real $\delta$ and a sequence $(m_q)_{q}$ of integers satisfying the following.
\begin{enumerate}
  \item[(a)] $0<\delta\mik1/2$ and
  \item[(b)] $m_q\meg2$ for all $q\in\nn$.
\end{enumerate}
For every $0<\ee\mik\delta/2$, we set $p_\ee=\lceil\log(1/\ee)\rceil$. By (a) it is immediate that $p_\ee\meg2$ for all $0<\ee\mik\delta/2$.
Moreover, for every $k\in\nn$ we set $\overline{pm}_k=\prod_{q=0}^k m_q$ and $\overline{spm}_k=\sum_{p=0}^k\prod_{q=p}^km_q$.
\begin{lem}
  \label{bound_for_T_ee}
  For every nonnegative integer $k$ and every real $\ee$ with $0<\ee\mik\delta/2$, we have that $T_\ee\big((m_q)_{q=0}^k\big)\mik A_2(5p_\ee\overline{pm}_k)$.
\end{lem}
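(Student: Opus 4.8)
The plan is to turn the statement into an explicit numerical inequality on $\log T_\ee$ (logarithms base $2$, as in the paper's convention, which is consistent with $A_2(x)=2^x$) and then verify it using only the formula defining $T_\ee$ together with elementary estimates.

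\medskip\noindent\textbf{Step 1 (a closed form for $T_\ee$).} Recall $T_\ee\big((m_q)_{q=0}^k\big)=\frac{2}{\ee'}\,\Sigma(\ee'/4,\ee'/2,m_k)$ with $\ee'=\ee^{\overline{pm}_{k-1}}2^{-2\overline{spm}_{k-1}}$, where it is convenient to set $\overline{pm}_{-1}=1$ and $\overline{spm}_{-1}=0$ so that the case $k=0$ (in which $\ee'=\ee$) is subsumed. Since $m_k\meg2$,
\[(\ee'/2)^{m_k}-(\ee'/4)^{m_k}=(\ee'/2)^{m_k}\big(1-2^{-m_k}\big)\meg\tfrac34(\ee'/2)^{m_k},\]
so that $\Sigma(\ee'/4,\ee'/2,m_k)\mik\frac{2m_k(m_k-1)}{3(\ee'/2)^{m_k}}+1\mik\frac{m_k^2\,2^{m_k}}{\ee'^{m_k}}+1$, the $+1$ absorbing the ceiling. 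Multiplying by $2/\ee'$ and using $0<\ee'\mik1$ and $m_k\meg2$ to swallow the lower-order terms yields the clean bound
\[T_\ee\big((m_q)_{q=0}^k\big)\mik\frac{4\,m_k^2\,2^{m_k}}{\ee'^{\,m_k+1}}.\]

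\medskip\noindent\textbf{Step 2 (logarithms).} Since $A_2(x)=2^x$, it suffices to show $\log T_\ee\big((m_q)_{q=0}^k\big)\mik 5p_\ee\overline{pm}_k$. Taking $\log$ in Step 1 and substituting $\log(1/\ee')=\overline{pm}_{k-1}\log(1/\ee)+2\overline{spm}_{k-1}$ gives
\[\log T_\ee\big((m_q)_{q=0}^k\big)\mik 2+2\log m_k+m_k+(m_k+1)\big(\overline{pm}_{k-1}\log(1/\ee)+2\overline{spm}_{k-1}\big).\]
Into this I would feed the available facts: $\log(1/\ee)\mik p_\ee$; the standing hypotheses $m_q\meg2$ and $0<\delta\mik 1/2$, whence $p_\ee\meg2$ and $m_k+1\mik\tfrac32 m_k$; and $\overline{spm}_{k-1}\mik 2\overline{pm}_{k-1}$, which follows from $\prod_{q=p}^{k-1}m_q\mik 2^{-p}\overline{pm}_{k-1}$ by summing a geometric series and which, for $k=1$, is in fact the equality $\overline{spm}_0=\overline{pm}_0$.

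\medskip\noindent\textbf{Step 3 (the arithmetic, split on $k$).} For $k\meg2$ we have $\overline{pm}_k=m_k\overline{pm}_{k-1}\meg 4m_k$, so the crude estimate $\overline{spm}_{k-1}\mik 2\overline{pm}_{k-1}$ gives $(m_k+1)\log(1/\ee')\mik\tfrac32\overline{pm}_k(p_\ee+4)\mik\tfrac92 p_\ee\overline{pm}_k$ (using $p_\ee\meg2$), while $2+2\log m_k+m_k\mik\tfrac12 p_\ee\overline{pm}_k$ because $2+2\log m_k\mik 3m_k$ and $p_\ee\overline{pm}_k\meg 8m_k$; adding these gives the claim. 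For $k=1$ one uses instead the exact value $\overline{spm}_0=m_0$, so that $(m_1+1)\log(1/\ee')\mik\tfrac32\overline{pm}_1(p_\ee+2)\mik 3p_\ee\overline{pm}_1$, together with $2+2\log m_1+m_1\mik 2p_\ee\overline{pm}_1$ since $\overline{pm}_1\meg 2m_1$. For $k=0$, $\ee'=\ee$ and the estimate reads $\log T_\ee\big((m_0)\big)\mik 2+2\log m_0+m_0+(m_0+1)p_\ee\mik 5p_\ee m_0$.

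\medskip\noindent\textbf{Main obstacle.} There is no conceptual difficulty here; the only real subtlety is that the constant $5$ is close to optimal in the extremal instances $k\in\{0,1\}$ with all $m_q=2$ and $p_\ee=2$, so the estimate cannot be closed by a single coarse sweep. In particular one must not overestimate $\overline{spm}_{k-1}$ precisely when $\overline{pm}_{k-1}$ is small, which is why the cases $k=0$ and $k=1$ have to be separated from $k\meg2$. Everything else is routine bookkeeping.
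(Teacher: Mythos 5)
Your proof is correct and follows essentially the same route as the paper's: bound $\Sigma(\ee'/4,\ee'/2,m_k)$ from the definition, combine with the factor $2/\ee'$, take base-$2$ logarithms, and show the resulting exponent is at most $5p_\ee\overline{pm}_k$ using $m_q\meg2$, $p_\ee\meg2$, and $\overline{spm}_j\mik2\overline{pm}_j$. The one cosmetic difference: the paper absorbs $m_k(m_k-1)$ into $2^{m_k}$ immediately (using $m_k(m_k-1)\mik 2^{m_k}$), which makes the final arithmetic close uniformly for all $k\meg 0$, whereas you carry an $m_k^2$ factor through Step~1 and consequently need to treat $k\in\{0,1\}$ separately from $k\meg 2$ in Step~3; both reach the same estimate, and your case analysis is correct.
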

 \begin{proof}
   Fix $k$ and $\ee$ as above. For the quantity $\ee'$ participating in the definition of the map $T_\ee$ (see \eqref{e02}), we have
   \begin{equation}
     \label{e04} \ee'=\ee^{\overline{pm}_{k-1}}2^{-2\overline{spm}_{k-1}} \stackrel{(b)}{\meg}\ee^{\overline{pm}_{k-1}}2^{-2\overline{pm}_k}.
   \end{equation}
   Moreover
   \begin{equation}
     \label{e05}
     \begin{split}
       \Big(\frac{\ee'}{2}\Big)^{m_k}-\Big(\frac{\ee'}{4}\Big)^{m_k}&= \Big(\frac{\ee'}{2}\Big)^{m_k}(1-2^{-m_k}) \stackrel{(b),\eqref{e04}}{\meg}\ee^{\overline{pm}_k}2^{-2\overline{spm}_k+m_k-1}.
     \end{split}
   \end{equation}
   Thus
   \begin{equation}
     \label{e06}
     \begin{split}
     \Sigma\Big(\frac{\ee'}{4},\frac{\ee'}{2},m_k\Big) \stackrel{\eqref{e05}}{\mik}\lceil\ee^{-\overline{pm}_k}2^{2\overline{spm}_k}\rceil \mik A_2(p_\ee\overline{pm}_k+2\overline{spm}_k).
   \end{split}
   \end{equation}
   Finally
   \[\begin{split}
     T_\ee\big((m_q)_{q=0}^k\big)&= \frac{2}{\ee'}\Sigma\Big(\frac{\ee'}{4},\frac{\ee'}{2},m_k\Big)\\ &\stackrel{\eqref{e04},\eqref{e06}}{\mik}A_2(1+p_\ee\overline{pm}_{k-1}+2\overline{pm}_k)A_2(p_\ee\overline{pm}_k+2\overline{spm}_k)\\
     &\stackrel{(a),(b)}{\mik}A_2(5p_\ee\overline{pm}_k).
   \qedhere\end{split}\]
 \end{proof}

 \begin{cor}
   \label{bound_for_Q}
   For every $0<\ee\mik\delta/2$, $k\in\nn$ and positive integer $r$, we have $Q^r_{\theta,\ee}\big((m_q)_{q=0}^k\big)\mik A_2((15+10r+10p_{\ee-\theta})\overline{pm}_k)$.
 \end{cor}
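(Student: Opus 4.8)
The plan is to reduce the statement to Lemma \ref{bound_for_T_ee}. By the definition of the map $Q^r_{\theta,\ee}$,
\[Q^r_{\theta,\ee}\big((m_q)_{q=0}^k\big)=T_{\ee''}\big((m_q)_{q=0}^k\big),\qquad \text{where}\quad \ee''=\tfrac18\Big(\tfrac{\ee-\theta}{2^{r}}\Big)^2,\]
so it suffices to check that $\ee''$ is an admissible value of the parameter in Lemma \ref{bound_for_T_ee} (i.e. $0<\ee''\mik\delta/2$) and then that $5p_{\ee''}\mik 15+10r+10p_{\ee-\theta}$; monotonicity of $A_2$ and the fact that $\overline{pm}_k\meg 1$ will then give the claimed bound.

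First I would verify $0<\ee''\mik\delta/2$. Using (a) we have $0<\theta<\ee\mik\delta/2\mik 1/4$, and since $r\meg 1$ we get $\tfrac{\ee-\theta}{2^r}\mik \tfrac{\delta/2}{2}=\delta/4$, hence $\ee''\mik \tfrac18(\delta/4)^2=\delta^2/128\mik\delta/2$. Thus Lemma \ref{bound_for_T_ee} applies and yields
\[T_{\ee''}\big((m_q)_{q=0}^k\big)\mik A_2\big(5p_{\ee''}\,\overline{pm}_k\big).\]
(Note also that $\ee-\theta<\ee\mik\delta/2$, so $p_{\ee-\theta}$ is defined.)

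Next I would estimate $p_{\ee''}=\lceil\log(1/\ee'')\rceil$. Since $1/\ee''=2^{3+2r}(\ee-\theta)^{-2}$, we have $\log(1/\ee'')=3+2r+2\log\big(1/(\ee-\theta)\big)$, and because $3+2r\in\nn$ this gives
\[p_{\ee''}=3+2r+\big\lceil 2\log(1/(\ee-\theta))\big\rceil\mik 3+2r+2\big\lceil\log(1/(\ee-\theta))\big\rceil=3+2r+2p_{\ee-\theta},\]
using $\lceil 2y\rceil\mik 2\lceil y\rceil$ for $y\meg 0$. Hence $5p_{\ee''}\mik 15+10r+10p_{\ee-\theta}$, and since $A_2$ is nondecreasing,
\[Q^r_{\theta,\ee}\big((m_q)_{q=0}^k\big)=T_{\ee''}\big((m_q)_{q=0}^k\big)\mik A_2\big((15+10r+10p_{\ee-\theta})\,\overline{pm}_k\big),\]
which is the desired inequality.

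I do not expect any genuine obstacle here: the argument is a direct substitution into Lemma \ref{bound_for_T_ee}. The only points requiring a little care are checking the hypothesis $\ee''\mik\delta/2$ so that the lemma is applicable, and the bookkeeping with the ceiling function, namely pulling the integer part $3+2r$ out before taking $\lceil\cdot\rceil$ and using $\lceil 2y\rceil\mik 2\lceil y\rceil$.
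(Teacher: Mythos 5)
Your proof is correct and is evidently the intended argument: the paper leaves Corollary~\ref{bound_for_Q} without a written proof precisely because it is the direct substitution into Lemma~\ref{bound_for_T_ee} that you carry out, together with the bookkeeping $p_{\ee''}=3+2r+\lceil 2\log(1/(\ee-\theta))\rceil\mik 3+2r+2p_{\ee-\theta}$ and the verification that $\ee''\mik\delta/2$ so the lemma applies.
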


 \begin{prop}
   \label{bound_for_S}
   For every integer $k\meg0$, we have $f\big((m_q)_{q=0}^k\big)\mik A_2^{(1+k)}(5s_\delta\overline{pm}_k)$.
 \end{prop}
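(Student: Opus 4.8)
The plan is to prove the bound by induction on $k$. For the base case $k=0$, note that in the definition \eqref{e02} of $T_\ee\big((m_0)\big)$ one has $\ee'=\ee$, so $T_\ee\big((m_0)\big)=\tfrac{2}{\ee}\,\Sigma(\ee/4,\ee/2,m_0)$ is non-increasing in $\ee$; hence the maximum defining $f\big((m_0)\big)=V_\delta\big((m_0),\emptyset\big)$ is attained at $\ee=\min\Delta_{s_\delta}=2^{-s_\delta}$, for which $p_\ee=s_\delta$ and $2^{-s_\delta}\mik\delta/2$. Lemma \ref{bound_for_T_ee} then gives $f\big((m_0)\big)\mik A_2(5s_\delta\overline{pm}_0)=A_2^{(1)}(5s_\delta\overline{pm}_0)$, as required.

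For the inductive step, fix $k\meg1$, assume the bound for all shorter sequences, and unfold the definition of $V_\delta$:
\[
f\big((m_q)_{q=0}^k\big)=\max\Big\{T_{\min\Delta_{s_\delta}}\big((m_q)_{q=0}^k\big),\ \max_{\substack{1\mik i\mik k\\ \theta<\ee\ \text{in}\ \Delta_{s_\delta+k}}}Q^{r_i}_{\theta,\ee}\big((m_q)_{q=i}^k\big)\Big\},
\]
where $r_i=\prod_{q=0}^{i-1}f\big((m_p)_{p=0}^q\big)$. The first term is at most $A_2(5s_\delta\overline{pm}_k)\mik A_2^{(1+k)}(5s_\delta\overline{pm}_k)$ by Lemma \ref{bound_for_T_ee} (using $\min\Delta_{s_\delta}=2^{-s_\delta}\mik\delta/2$, and that $A_2(x)\meg x$ makes $j\mapsto A_2^{(j)}$ non-decreasing). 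For the remaining terms, $Q^r_{\theta,\ee}$ depends on $\theta,\ee$ only through $\ee-\theta$ and is non-increasing in it (the subscript of $T$ is increasing in $\ee-\theta$), so the maximum over pairs $\theta<\ee$ in $\Delta_{s_\delta+k}$ is attained at the least gap $\ee-\theta=2^{-(s_\delta+k)}$; since this gap is also realized by the pair $\ee=2^{-(s_\delta+k-1)}$, $\theta=2^{-(s_\delta+k)}$ with $0<\theta<\ee\mik2^{-s_\delta}\mik\delta/2$ and $p_{\ee-\theta}=s_\delta+k$, Corollary \ref{bound_for_Q} (applied to this pair and to the reindexed sequence $(m_q)_{q=i}^k$) bounds it by $A_2\big((15+10s_\delta+10k+10r_i)\prod_{q=i}^k m_q\big)$.

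Thus the induction reduces to proving, for each $i\in\{1,\dots,k\}$, the arithmetic inequality
\[
\big(15+10s_\delta+10k+10r_i\big)\prod_{q=i}^k m_q\ \mik\ A_2^{(k)}\big(5s_\delta\overline{pm}_k\big),
\]
since one further application of the increasing map $A_2$ then yields $A_2^{(1+k)}(5s_\delta\overline{pm}_k)$ and, combined with the bound on the first term, closes the induction. Three ingredients go into this estimate. First, $\prod_{q=i}^k m_q\mik\overline{pm}_k$ and, by hypothesis (b), $\overline{pm}_k\meg2^{k+1}$, so $k$, $s_\delta$ and $\log\overline{pm}_k$ are negligible against $\overline{pm}_k$. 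Second, the inductive hypothesis gives
\[
\log_2 r_i=\sum_{q=0}^{i-1}\log_2 f\big((m_p)_{p=0}^q\big)\ \mik\ \sum_{q=0}^{i-1}A_2^{(q)}\big(5s_\delta\overline{pm}_q\big)\ \mik\ k\cdot A_2^{(k-1)}\big(5s_\delta\overline{pm}_{k-1}\big),
\]
the last step because the summands increase with $q$ and $i\mik k$. Third, the expansiveness of iterated exponentials --- $A_2^{(j)}(x+1)\meg A_2^{(j)}(x)+1$ for $j\meg0$ and $A_2^{(j)}(x+1)\meg2\,A_2^{(j)}(x)$ for $j\meg1$ --- together with $\overline{pm}_k\meg2\overline{pm}_{k-1}$, upgrades the previous line to $k\cdot A_2^{(k-1)}\big(5s_\delta\overline{pm}_{k-1}\big)\mik A_2^{(k-1)}\big(5s_\delta\overline{pm}_k\big)-\overline{pm}_k$, hence $r_i\mik 2^{A_2^{(k-1)}(5s_\delta\overline{pm}_k)-\overline{pm}_k}$; this leaves ample room to multiply by $\prod_{q=i}^k m_q\mik\overline{pm}_k$ and add the linear terms while staying below $A_2^{(k)}\big(5s_\delta\overline{pm}_k\big)=2^{A_2^{(k-1)}(5s_\delta\overline{pm}_k)}$. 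The few short-tower cases $k\in\{1,2\}$, where this chain of expansiveness steps is too short to run, are checked by direct computation.

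The delicate point --- and the one I expect to require the most care --- is the third ingredient: Corollary \ref{bound_for_Q} feeds to $A_2$ a quantity that is only \emph{linear} in $r_i$, whereas the inductive hypothesis controls $r_i$ itself only by (a product of) towers of exponentials of height about $k$, so one must verify that the single iteration separating $A_2^{(1+k)}$ from $A_2^{(k)}$ is precisely what is needed to absorb that linear dependence. Here the standing hypotheses enter essentially: (b), $m_q\meg2$, makes $\overline{pm}$ grow geometrically in $k$ so that $k$ is dwarfed by $\overline{pm}_k$, and (a), $0<\delta\mik1/2$, forces $s_\delta\meg2$, supplying the constant slack at the base of the towers.
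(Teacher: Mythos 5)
Your proposal is correct and follows essentially the same route as the paper's proof: induction on $k$, Lemma \ref{bound_for_T_ee} for the base case and the $T_{\min\Delta_{s_\delta}}$ term, Corollary \ref{bound_for_Q} for the $Q^{r_i}_{\theta,\ee}$ terms at the worst-case gap $\ee-\theta=2^{-(s_\delta+k)}$, and a final arithmetic estimate that absorbs $r_i$ (controlled via $\log_2 r_i$ by the inductive hypothesis) into the one extra iterate of $A_2$. The only organizational difference is that the paper first dominates all $Q^{r_i}$ terms simultaneously by a single $Q^\beta_{\theta,\ee}\big((m_q)_{q=0}^k\big)$ with $\beta=\prod_{j<k}\alpha_j$ and then splits $(15+10\beta+10(s_\delta+k))\overline{pm}_k$ into pieces handled by two stated elementary inequalities, whereas you bound each index $i$ individually and argue through $\log_2 r_i$; and your flagged ``checked by direct computation'' for small $k$ corresponds to the paper's own explicit case distinction at $k=1$.
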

 \begin{proof}
   We will prove it by induction on $k$. For $k=0$ it is immediate by the definition of the maps $V_\delta,f$ and Lemma \ref{bound_for_T_ee}. Assume that for some $k\meg1$ we have that lemma holds for $k-1$. We set $\alpha_j=A_2^{(1+j)}(5s_\delta\overline{pm}_j)$ for all $0\mik j<k$ and $\beta=\prod_{j=0}^{k-1}\alpha_j$. Then
   \[\beta\mik \prod_{j=0}^{k-1}A_2^{(j+1)}(5s_\delta\overline{pm}_j)=A_2\Big(\sum_{j=0}^{k-1}A_2^{(j)}(5s_\delta\overline{pm}_j)\Big).\]
   By the definition of the maps $V_\delta,f$, the inductive assumption, the above inequality and Corollary \ref{bound_for_Q} we have the following.
   \begin{equation}
     \begin{split}
     \label{e14}
     f\big((m_q)_{q=0}^k\big)& \mik Q^{\beta}_{\frac{1}{2^{s_\delta+k}},\frac{2}{2^{s_\delta+k}}}\big((m_q)_{q=0}^k\big)
     \mik A_2\big((15+10\beta+10(s_\delta+k))\overline{pm}_k\big)\\
     &= A_2\big((25+10s_\delta)\overline{pm}_{k}+10(k-1)\overline{pm}_{k}+10\beta\overline{pm}_{k}\big).
     \end{split}
   \end{equation}
   In order to bound $f\big((m_q)_{q=0}^k\big)$ we will need the following elementary inequalities.
   \begin{enumerate}
     \item[($1'$)] $20x-10x\log x\mik 2^x$ for all integers $x\meg8$ and
     \item[($2'$)] $(25+10x)y\mik 2^{xy}$ for all integers $x,y$ with $x\meg2$ and $y\meg4$.
   \end{enumerate}
   Observing that $\overline{pm}_k\meg 2^{k+1}$, the above inequalities in particular yield the following.
   \begin{enumerate}
     \item[(1)] $10(k-1)\overline{pm}_k\mik A_2(\overline{pm}_k)$ for all $k>1$ and
     \item[(2)] $(25+10s_\delta)\overline{pm}_k\mik A_2(s_\delta\overline{pm}_k)$ for all $k\meg1$.
   \end{enumerate}
   By the latter inequalities and distinguishing cases for $k=1$ and $k>1$ one can easily derive the following.
   \begin{equation}
     \label{e15}
     10(k-1)\overline{pm}_k+(25+10s_\delta)\overline{pm}_k\mik A_2^{(k)}(s_\delta\overline{pm}_k).
   \end{equation}
   On the other hand, using the elementary inequality $10x\mik 2^{2x}$ for all integers $x\meg4$, the fact that $s_\delta\meg2$ and $5s_\delta\overline{pm}_{k-1}+1\mik3s_\delta\overline{pm}_{k}$, we get that
   \begin{equation}
     \label{e16}
     10\beta\overline{pm}_{k}\mik A_2^{(k)}(4s_\delta\overline{pm}_{k})
   \end{equation}
   The result follows easily by applying \eqref{e15} and \eqref{e16} to \eqref{e14}.
 \end{proof}


\end{document}